\newcommand{\doiref}[1]{\href{https://doi.org/#1}{\textcolor{teal}{#1}}}
\newcommand{\mquad}{\mkern-18mu}
\newcommand{\coleq}{\mathrel{\mathop:}=}
\newcommand{\DD}{\mathrm D}
\newcommand{\dd}{\mathrm d}
\newcommand{\ee}{\mathrm e}
\renewcommand{\div}{\operatorname{div}}
\newtheorem{theorem}{Theorem}[section]
\newtheorem{definition}[theorem]{Definition}
\newtheorem{lemma}[theorem]{Lemma}
\newtheorem{proposition}[theorem]{Proposition}
\newtheorem{corollary}[theorem]{Corollary}
\newtheorem{remark}[theorem]{Remark}
\begin{document}
	\title{Exponential equilibration of an electro--energy--reaction--diffusion system arising from a two-level semiconductor model}
	
	\author{Michael Kniely%
		\thanks{University of Graz, Department of Mathematics and Scientific Computing, Heinrichstra\ss e 36, 8010 Graz, Austria, michael.kniely@uni-graz.at}}
	
	\date{August 7, 2025}
	
	\maketitle
	
	\begin{abstract}
		\noindent
		We consider a thermodynamically correct framework for electro--energy--reaction--diffusion systems, which feature a monotone entropy functional while conserving the total charge and the total energy. For these systems, we construct a relative entropy functional, which acts as a Lyapunov functional, and we also give an expression for the related entropy production functional. The main result of this note is the constructive derivation of an explicit entropy--entropy production inequality in a special situation, namely for a two-level semiconductor model of Shockley--Read--Hall type. Supposing that global bounded solutions exist, we show that the relative entropy decays exponentially along trajectories of the underlying system. As a consequence, these global solutions converge to the equilibrium at an exponential rate. 
	\end{abstract}
	
	\paragraph{2020 Mathematics Subject Classification:} Primary: 35Q79, Secondary: 78A30, 35B40, 35K57, 35Q81 
	
	\paragraph{Key words and phrases:} reaction--diffusion systems, temperature, electrostatic potential, exponential equilibration, entropy method, semiconductor model 
	
	\section{Introduction and preliminary results}
	\label{sec:intro} 
	Reaction--diffusion systems serve as the basic PDE model for a huge number of processes in natural and social sciences. Applications include, for example, chemical reactions of diffusing substances, charge carrier dynamics in semiconductors, phase separation processes, predator--prey models, opinion propagation, and many more. As opposed to kinetic models, reaction--diffusion systems take a simplified density-based perspective even though the underlying microscopic models may be discrete. More detailed modeling approaches often demand for a coupling of reaction--diffusion systems with, for instance, ODEs, Poisson's equation, or the Navier--Stokes equations. 
	
	\subsection{Electro--energy--reaction--diffusion systems}
	\label{subsec:eerds}
	In the present note, we focus on electro--energy--reaction--diffusion systems (EERDS), which describe the evolution of $I \in \mathbb N$ reactive, diffusive, and charged constituents subject to temperature effects in a bounded Lipschitz domain $\Omega \subset \mathbb R^d$, $d \in \mathbb N$. Typical situations, which we have in mind, are the dynamics of charge carriers in semiconductors and the motion of ions in biological tissues. To this end, we consider the generalized reaction--diffusion system 
	\begin{align}
		\label{eq:system}
		\begin{pmatrix}
			\dot{\boldsymbol c} \\ 
			\dot u 
		\end{pmatrix}
		= -\div 
		\begin{pmatrix}
			\boldsymbol j_{\!\boldsymbol c} \\ 
			j_u
		\end{pmatrix}
		+ 
		\begin{pmatrix}
			\boldsymbol R(\boldsymbol Z) \\ 
			-\boldsymbol q^\mathsf{T} \boldsymbol j_{\!\boldsymbol c} \nabla \psi
		\end{pmatrix}
	\end{align}
	accounting for energetic and electrostatic effects discussed below. The vector of non-negative concentrations $\boldsymbol c = (c_i)_i \in [0, \infty)^I$ and the internal energy $u \geq 0$ constitute the \emph{primal variables} $\boldsymbol Z = (\boldsymbol c, u) \in [0, \infty)^{I+1}$. We endow \eqref{eq:system} with no-flux boundary conditions $\nu {\cdot} j_{c_i} = \nu {\cdot} j_u = 0$ on $\partial \Omega$ for all $i = 1, \dotsc, I$ and with sufficiently smooth initial conditions $\boldsymbol Z_0$. Here and below, $\nu \in \mathbb R^d$ denotes the outer unit normal vector on $\partial \Omega$. Apart from the concentration flux $\boldsymbol j_{\!\boldsymbol c} \in \mathbb R^{I \times d}$ and the internal energy flux $j_u \in \mathbb R^{1 \times d}$, the vector $\boldsymbol R(\boldsymbol Z) \in \mathbb R^I$ stands for the reactions. We assume that $\boldsymbol q {\cdot} \boldsymbol R (\boldsymbol Z) = 0$ holds together with the charge vector $\boldsymbol q \in \mathbb Z^I$. This ensures that the total charge and the total energy defined in \eqref{eq:def-energy-charge} are conserved. Moreover, the scalar quantity $-\boldsymbol q^\mathsf{T} \boldsymbol j_{\!\boldsymbol c} \nabla \psi$ models the electric power of the motion of the charged species in the self-consistently generated electric field $\nabla \psi \in \mathbb R^d$. In this context, $\psi$ denotes the solution to Poisson's equation 
	\begin{align}
		\label{eq:poisson}
		-\div \big( \varepsilon \nabla \psi \big) = \boldsymbol q \cdot \boldsymbol c
	\end{align}
	with $\nu {\cdot} \nabla \psi = 0$ on $\partial \Omega$ and $\int_\Omega \psi \, \dd x = 0$. A necessary compatibility condition is $\int_\Omega \boldsymbol q {\cdot} \boldsymbol c \, \dd x = 0$. The parameter function $\varepsilon(x) \in [\underline \varepsilon, \overline \varepsilon]$ with constants $0 < \underline \varepsilon \leq \overline \varepsilon < \infty$ represents the uniformly positive permittivity of the underlying material. 
	We suppose that the concentration flux $\boldsymbol j_{\!\boldsymbol c}$ and the heat flux $j_u$ are given in the form 
	\begin{align}
		\label{eq:flux-cu} 
		\begin{pmatrix}
			\boldsymbol j_{\!\boldsymbol c} \\ 
			j_u 
		\end{pmatrix}
		\coleq -\mathbb M(\boldsymbol Z)
		\begin{pmatrix}
			\nabla \boldsymbol y - v \boldsymbol q \otimes \nabla \psi \\ 
			\nabla v
		\end{pmatrix}
	\end{align}
	with a positive semidefinite mobility matrix $\mathbb M(\boldsymbol Z) \in \mathbb R^{(I+1) \times (I+1)}$ and \emph{dual variables} $\boldsymbol W = (\boldsymbol y,v) = -\mathrm DS(\boldsymbol Z) \in \mathbb R^{I+1}$, which correspond to the negative (Fr\'echet) derivative of the concave entropy density $S(\boldsymbol Z)$. The vector $\boldsymbol y = (y_i)_i \in \mathbb R^I$ consists of the chemical potentials $y_i$, whereas $v < 0$ is the negative inverse temperature; cf.~\eqref{eq:def-temperature}. The term $-v \boldsymbol q \otimes \nabla \psi \in \mathbb R^{I \times d}$ gives rise to a drift along $\nabla \psi$ proportional to $v (\mathbb M(\boldsymbol Z) \boldsymbol q)_i$ for the $i$th substance. Note that the temperature 
	\begin{align}
		\label{eq:def-temperature}
		\theta \coleq -v^{-1} = \big( \mathrm D_u S(\boldsymbol Z) \big)^{-1} > 0
	\end{align}
	is positive by the strict monotonicity of $u \mapsto S(\boldsymbol c, u)$; cf.~\eqref{eq:def-entropy}, \ref{eq:hypo-sigma}, and \ref{eq:hypo-w_i}. 
	
	The structure of the flux in \eqref{eq:flux-cu} goes at least back to \cites{AGH02,M11}; see also Remark \ref{rem:correct-flux}. Using \eqref{eq:flux-cu}, one can reformulate \eqref{eq:system} in terms of a self-adjoint operator acting on the derivative of the entropy functional $\mathcal S(\boldsymbol Z)$ introduced in \eqref{eq:def-entropy-functional}. Assuming that a positive semidefinite matrix $\mathbb L(\boldsymbol Z) \in \mathbb R^{I \times I}$ exists satisfying $\boldsymbol R(\boldsymbol Z) = \mathbb L(\boldsymbol Z) \mathrm D_{\boldsymbol c} S(\boldsymbol Z)$ (cf.\ \cite{HHMM18,MM18}), we get 
	\begin{align}
		\label{eq:system-cu-a}
		\begin{pmatrix}
			\dot{\boldsymbol c} \\ 
			\dot u 
		\end{pmatrix}
		= \bigg( \mathbb A(\nabla \psi)^* \mathbb M(\boldsymbol Z) \mathbb A(\nabla \psi) + 
		\begin{pmatrix}
			\mathbb L(\boldsymbol Z) & 0 \\ 
			0 & 0
		\end{pmatrix}
		\!\bigg) \mathrm D\mathcal S(\boldsymbol Z), \qquad \mathbb A(\nabla \psi) \coleq \begin{pmatrix} \nabla_I & -\boldsymbol q \otimes \nabla \psi \\ 0 & \nabla \end{pmatrix}.
	\end{align} 
	More precisely, the operator-valued $(I{+}1) {\times} (I{+}1)$-matrix $\mathbb A(\nabla \psi)$ is defined via $(\mathbb A(\nabla \psi))_{i,i} \coleq \nabla$ for all $i = 1, \dotsc, I{+}1$, $(\mathbb A(\nabla \psi))_{i,I+1} \coleq -q_i \nabla \psi^{\mathsf T}$ for all $i = 1, \dotsc, I$, and $(\mathbb A(\nabla \psi))_{i,j} \coleq 0$ for all other $i,j = 1, \dots, I{+}1$. The adjoint operator $\mathbb A(\nabla \psi)^*$ satisfies $(\mathbb A(\nabla \psi)^*)_{i,i} = -\div$ for all $i = 1, \dotsc, I{+}1$, $(\mathbb A(\nabla \psi)^*)_{I+1,j} = -q_j \nabla \psi$ for all $j = 1, \dotsc, I$, and $(\mathbb A(\nabla \psi)^*)_{i,j} \coleq 0$ for all other $i,j = 1, \dots, I{+}1$. A more explicit equivalent representation of \eqref{eq:system} and \eqref{eq:system-cu-a} is 
	\begin{align}
		\label{eq:system-cu} 
		\begin{pmatrix}
			\dot{\boldsymbol c} \\ 
			\dot u 
		\end{pmatrix}
		= \div \bigg( \mathbb D(\boldsymbol Z) \, \begin{pmatrix} \nabla \boldsymbol c \\ \nabla u \end{pmatrix} + \theta^{-1} \mathbb M(\boldsymbol Z) \begin{pmatrix} \boldsymbol q \\ 0 \end{pmatrix} \otimes \nabla \psi \bigg) + \begin{pmatrix} \boldsymbol R(\boldsymbol Z) \\ -\boldsymbol q^{\mathsf T} \boldsymbol j_{\!\boldsymbol c} \nabla \psi \end{pmatrix} 
	\end{align}
	using the positive semidefinite diffusion matrix $\mathbb D(\boldsymbol Z) \coleq -\mathbb M(\boldsymbol Z) \mathrm D^2 S(\boldsymbol Z) \in \mathbb R^{(I+1) \times (I+1)}$. 
	
	We refer to \eqref{eq:system-np}--\eqref{eq:flux-np} for the specific EERDS, which we investigate in the main part of this paper, and which models the dynamics of electrons and holes in a semiconductor. 
	
	\begin{remark}[Electrostatic coupling]
		\label{rem:correct-flux}
		Motivated by the structure of the energy equation in \cite[Rem. 7.3]{AGH02}, the electrostatic drift term $\boldsymbol q \otimes \nabla (v \psi)$ and the energetic source term $\psi \boldsymbol q \cdot \div \boldsymbol j_{\!\boldsymbol c}$ were suggested in \cite{M11}. But both terms are explicitly depending on the bare potential $\psi$ and not only on the electric field $\nabla \psi$ as expected from physics. The correct versions given in \eqref{eq:system} and \eqref{eq:flux-cu} were subsequently published in \cite{M15}. 
	\end{remark}
	
	Consider the total energy functional $\mathcal E(\boldsymbol c,u)$ and the total charge functional $\mathcal Q(\boldsymbol c)$ defined for all $(\boldsymbol c, u) \in \mathrm L^2_+(\Omega)^{I+1}$ via 
	\begin{align}
		\label{eq:def-energy-charge}
		\mathcal E(\boldsymbol c,u) \coleq \int_\Omega \Big( \frac\varepsilon2|\nabla \psi_{\boldsymbol c}|^2 + u \Big) \, \dd x, \qquad 
		\mathcal Q(\boldsymbol c) \coleq \int_\Omega \boldsymbol q \cdot \boldsymbol c \; \dd x, 
	\end{align}
	where $\psi \equiv \psi_{\boldsymbol c} \in \mathrm H^1(\Omega)$ is the solution to Poisson's equation \eqref{eq:poisson} provided $\int_\Omega \boldsymbol q {\cdot} \boldsymbol c \, \dd x = 0$. In addition, let the total entropy functional be defined for all $(\boldsymbol c, u) \in \mathrm L^2_+(\Omega)^{I+1}$ via 
	\begin{align}
	\label{eq:def-entropy-functional}
	\mathcal S(\boldsymbol c, u) \coleq \int_\Omega S(\boldsymbol c, u) \, \dd x, 
	\end{align}
	where $S : [0, \infty)^{I+1} \rightarrow \mathbb R$ is a concave entropy density. A special class of entropy densities $S(\boldsymbol c, u)$ is given in \eqref{eq:def-entropy} below. The EERDS \eqref{eq:system-cu} is thermodynamically correct in the sense that $\mathcal E(\boldsymbol c, u)$ and $\mathcal Q(\boldsymbol c)$ are conserved and that $\mathcal S(\boldsymbol c, u)$ is non-decreasing as a function of time along solutions to the EERDS \eqref{eq:system}--\eqref{eq:flux-cu}. The conservation of charge and energy as well as the monotonicity of the entropy can be seen from the following formal calculations, where $\langle \cdot, \cdot \rangle$ denotes an appropriate duality pairing; see Remark \ref{rem:duality} for more details. On a formal level, we obtain 
	\begin{align}
		\frac{\mathrm d}{\mathrm dt} \mathcal E(\boldsymbol c, u) &= \bigg \langle \binom{\dot {\boldsymbol c}}{\dot u}, \mathrm D \mathcal E(\boldsymbol c, u) \bigg \rangle = \bigg \langle \binom{\dot {\boldsymbol c}}{\dot u}, \binom{\boldsymbol q \psi}{1} \bigg\rangle \nonumber \\ 
		&= \int_{\mathrm \Omega} \bigg( \mathbb M \, \mathbb A(\nabla \psi) \mathrm DS : \mathbb A(\nabla \psi) \binom{\boldsymbol q \psi}{1} + \binom{\boldsymbol R}{0} \cdot \binom{\boldsymbol q \psi}{1} \bigg) \, \dd x = 0 \label{eq:ddt-energy}
	\end{align}
	using \eqref{eq:system-cu-a}, $\mathbb A(\nabla \psi) (\boldsymbol q^{\mathsf T} \psi, 1)^{\mathsf T} = 0$,	and $\boldsymbol q {\cdot} \boldsymbol R = 0$. Analogously, one finds 
	\begin{align}
		\frac{\mathrm d}{\mathrm dt} \mathcal Q(\boldsymbol c, u) &= \bigg \langle \binom{\dot {\boldsymbol c}}{\dot u}, \mathrm D \mathcal Q(\boldsymbol c, u) \bigg \rangle = \bigg \langle \binom{\dot {\boldsymbol c}}{\dot u}, \binom{\boldsymbol q}{0} \bigg \rangle \nonumber \\ 
		&= \int_{\mathrm \Omega} \bigg( \mathbb M \, \mathbb A(\nabla \psi) \mathrm DS : \mathbb A(\nabla \psi) \binom{\boldsymbol q}{0} + \binom{\boldsymbol R}{0} \cdot \binom{\boldsymbol q}{0} \bigg) \, \dd x = 0, \label{eq:ddt-charge}
	\end{align}
	while the monotonicity of the total entropy follows from 
	\begin{align}
		\frac{\mathrm d}{\mathrm dt} \mathcal S(\boldsymbol c, u) &= \bigg \langle \binom{\dot {\boldsymbol c}}{\dot u}, \mathrm D \mathcal S(\boldsymbol c, u) \bigg \rangle \nonumber \\ 
		&= \int_{\mathrm \Omega} \bigg( \mathbb M \, \mathbb A(\nabla \psi) \mathrm DS : \mathbb A(\nabla \psi) \mathrm DS + \binom{\boldsymbol R}{0} \cdot \binom{\mathrm D_{\boldsymbol c} S}{\mathrm D_u S} \bigg) \, \dd x \geq 0. \label{eq:ddt-entropy}
	\end{align}
	Note that it suffices to demand $\mathrm D_{\boldsymbol c} S(\boldsymbol Z) {\cdot} \boldsymbol R(\boldsymbol Z) \geq 0$ for all $\boldsymbol Z \in [0, \infty)^{I+1}$ without imposing the additional relation $\boldsymbol R(\boldsymbol Z) = \mathbb L(\boldsymbol Z) \mathrm D_{\boldsymbol c} S(\boldsymbol Z)$. 
	
	\begin{remark}[Duality pairing]
	\label{rem:duality}
	The formal manipulations in \eqref{eq:ddt-energy}--\eqref{eq:ddt-charge} and \eqref{eq:h-dissip-s-dissip} below are consistent with the setting of Corollary \ref{cor:exp-conv} on the exponential equilibration of bounded global weak solutions $(n, p, u)$; cf.\ Definition \ref{def:weak-solution}. In this situation, we can use $\langle \cdot, \cdot \rangle \equiv \langle \cdot, \cdot \rangle_{(\mathrm W^{1,\infty})^*, \mathrm W^{1,\infty}}$ as $(\dot n, \dot p, \dot u)(t) \in \mathrm W^{1,\infty}(\Omega)^*$ holds for a.e.\ $t \geq 0$ and as the uniform boundedness of $n$ and $p$ arising from \ref{eq:lower-bound-theta}--\ref{eq:upper-bound-u} and discussed in Remark \ref{rem:reaction} leads to $\psi(t) \in \mathrm W^{1,\infty}(\Omega)$ for all $t \geq 0$. The situation in \eqref{eq:ddt-entropy} is more subtle since $\mathrm D_{\boldsymbol c} S$ is generally unbounded. However, the mere (and possibly infinite) non-negativity of $\frac{\mathrm d}{\mathrm dt} \mathcal S$ does not rely on any cancellations of integrable functions but only on the positive semidefiniteness of $\mathbb M$ and $\mathrm D_{\boldsymbol c} S {\cdot} \boldsymbol R \geq 0$ as supposed in \ref{eq:hypo-m} below. 
	\end{remark}

	We conclude this subsection with a brief overview of previous studies on electro--energy--reaction--diffusion systems (EERDS). 
	One of the first physically sound models of EERDS appeared in \cite{W90}, where stationary and transient systems for charge carriers in semiconductors are generalized to a temperature-dependent framework. The model proposed in \cite{W90} was partially reestablished in \cite{AGH02} starting from a free energy functional and employing the maximum-entropy principle. While \cites{AGH02,W90} and also \cites{M11,M15} only deal with modeling considerations, a first rigorous existence result was published in \cite{GH05} for the stationary case. More recently, the EERDS framework above was also used in \cites{BPZ17} to construct global weak solutions in a fluid setting with bounded concentrations. 
	
	\paragraph*{Notation.} The following notations and conventions are used throughout the paper. \vspace{-.5ex} 
	\begin{itemize}
		\item For $a,b \in \mathbb R^{k}$, we denote the Euclidean inner product by $a \cdot b \coleq \sum_{i=1}^k a_i b_i$. \vspace{-1ex} 
		\item For $A, B \in \mathbb R^{k \times l}$, we express the Frobenius inner product as $A : B \coleq \sum_{i=1}^k \sum_{j=1}^l A_{ij} B_{ij}$. \vspace{-1ex} 
		\item Vectors like $\boldsymbol c \in \mathbb R^I$ and $\boldsymbol Z \in \mathbb R^{I+1}$, which are related to the number of species $I \in \mathbb N$, are written in a bold face notation. Matrices like $\mathbb L \in \mathbb R^{I \times I}$ and $\mathbb M \in \mathbb R^{(I+1) \times (I+1)}$ are printed with a blackboard notation. \vspace{-1ex} 
		\item We use $C_u$, $C_\theta$ and $c_u$, $c_\theta$ for upper and lower bounds on $u$ and $\theta$, while $G_\sigma$, $G_w$, $K_\sigma$, $K_w$ and $g_w$, $k_\sigma$, $k_w$ are used for typically large and typically small constants related to the corresponding functions. 
	\end{itemize}
	
	\paragraph*{Outline.} In the remainder of this section, we define an appropriate relative entropy functional along with the corresponding entropy production functional, and we briefly discuss the purpose of deriving a so-called entropy--entropy production (EEP) inequality. Our main results on the EEP inequality and the exponential equilibration are collected in Theorem \ref{thm:EEP-inequality} and Corollary~\ref{cor:exp-conv}. Section \ref{sec:proof} is devoted to the corresponding proofs. More precisely, we start with the proof of Proposition \ref{prop:equilibrium-special} on the explicit representation of the equilibrium in Subsection \ref{subsec:proof-equil}. Propositions \ref{prop:relative-entropy-upper-bound} and \ref{prop:entropy-production-lower-bound} in Subsection \ref{subsec:proof-thm} establish upper and lower bounds for the relative entropy and the entropy production, respectively, by employing a quadratic relative entropy. Theorem \ref{thm:EEP-inequality} then immediately follows, while Corollary \ref{cor:exp-conv} is proven in Subsection \ref{subsec:proof-cor}.

	\subsection{Relative entropy and entropy production}
	\label{subsec:entropy}
	According to \cite{AGH02} and \cite{HKM24}, we introduce an equilibrium as a constrained maximizer of the entropy functional $\mathcal S(\boldsymbol c, u) = \int_\Omega S(\boldsymbol c, u) \, \dd x$. This is a purely thermodynamical characterization of an equilibrium related to the monotonicity of the total entropy (i.e., the second law of thermodynamics) and the conservation of charge and energy along trajectories of \eqref{eq:system}--\eqref{eq:flux-cu}. 
	
	\begin{definition}[Equilibrium]
		A non-negative state $\boldsymbol Z_\infty = (\boldsymbol c_\infty, u_\infty) \in \mathrm L^2_+(\Omega)^{I+1}$ is called an equilibrium of \eqref{eq:system}--\eqref{eq:flux-cu} if it is a maximizer of the total entropy $\mathcal S(\boldsymbol c, u)$ subject to prescribed values for the total energy $\mathcal E(\boldsymbol c, u) = E_0 > 0$ and the total charge $Q(\boldsymbol c) = Q_0 \in \mathbb R$, i.e., if 
		\begin{align*}
			\mathcal S(\boldsymbol c_\infty, u_\infty) = \sup \big\{ \mathcal S(\boldsymbol c, u) \ \big| \ (\boldsymbol c, u) \in \mathrm L^2_+(\Omega)^{I+1}, \ \mathcal E(\boldsymbol c, u) = E_0, \ Q(\boldsymbol c) = Q_0 \big\}. 
		\end{align*}
	\end{definition}
	
	The existence, uniqueness, and regularity of the equilibrium $(\boldsymbol c_\infty, u_\infty) \in \mathrm L^2_+(\Omega)^{I+1}$ has been proven in \cite{HKM24} in a more general setting. 
	The subsequent statement gives a condensed version of the results in \cite{HKM24} adapted to our situation. We stress that we need to demand $Q_0 = 0$ as the equilibrium potential $\psi_\infty \in \mathrm H^1(\Omega)$ is the solution to 
	\begin{align}
		\label{eq:poisson-equil}
		-\div \big( \varepsilon \nabla \psi_\infty \big) = \boldsymbol q \cdot \boldsymbol c_\infty
	\end{align}
	with $\nu {\cdot} \nabla \psi_\infty = 0$ on $\partial \Omega$ and $\int_\Omega \psi_\infty \, \dd x = 0$. 
	
	\begin{proposition}[Existence and properties of the equilibrium, cf.\ \cite{HKM24}]
		\label{prop:equilibrium}
		There exists a unique equilibrium $(\boldsymbol c_\infty, u_\infty) \in \mathrm L^2_+(\Omega)^{I+1}$. The equilibrium $(\boldsymbol c_\infty, u_\infty)$ is bounded, continuous, and uniformly positive, the equilibrium temperature $\theta_\infty > 0$ is constant in $\Omega$, and $\int_\Omega \boldsymbol q {\cdot} \boldsymbol c_\infty \, \dd x = 0$. 
	\end{proposition}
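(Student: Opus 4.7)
The plan is to obtain the maximizer by the direct method of the calculus of variations and then to read off the structural properties from the Euler--Lagrange conditions and elliptic regularity for \eqref{eq:poisson-equil}. Since the full general argument is carried out in \cite{HKM24}, the present proposition is really a matter of verifying that our EERDS fits into that abstract framework and quoting the consequences; nevertheless, it is instructive to sketch the individual steps.

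First, I would fix $E_0 > 0$ and the compatible value $Q_0 = 0$ (forced by the zero-mean Neumann problem for $\psi_{\boldsymbol c}$) and work on the convex affine slice $\mathcal A \coleq \{(\boldsymbol c, u) \in \mathrm L^2_+(\Omega)^{I+1} : \mathcal E(\boldsymbol c,u) = E_0, \mathcal Q(\boldsymbol c) = 0\}$. The functional $\mathcal S$ is concave, hence weakly upper semicontinuous, and the superlinear decay of $-S$ at infinity together with the energy constraint (which controls $\|u\|_{\mathrm L^1}$ and $\|\nabla \psi_{\boldsymbol c}\|_{\mathrm L^2}$) bounds any maximizing sequence in $\mathrm L^1$. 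The constraints are linear in $u$ and continuous in $\boldsymbol c$ with respect to weak $\mathrm L^2$-convergence via the linear continuous map $\boldsymbol c \mapsto \psi_{\boldsymbol c}$ from $\mathrm L^2$ to $\mathrm H^1$, so the weak limit of a maximizing sequence stays in $\mathcal A$ and realizes the supremum. Strict concavity of $S$ in $(\boldsymbol c, u)$ then upgrades this to uniqueness.

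Second, the first-order optimality conditions with Lagrange multipliers $\lambda, \mu \in \mathbb R$ yield pointwise on $\Omega$
\begin{align*}
\mathrm D_u S(\boldsymbol c_\infty, u_\infty) = \lambda, \qquad \mathrm D_{\boldsymbol c} S(\boldsymbol c_\infty, u_\infty) = \mu \boldsymbol q + \lambda \boldsymbol q \psi_\infty.
\end{align*}
The first identity together with \eqref{eq:def-temperature} forces $\theta_\infty = 1/\lambda$ to be a positive constant on $\Omega$. The strict monotonicity of $u \mapsto \mathrm D_u S$ inverts the first relation into $u_\infty = u_\infty(\boldsymbol c_\infty, \lambda)$, and the second relation then inverts (pointwise) to express $\boldsymbol c_\infty$ as an explicit function of $\psi_\infty$ and the constants; substituting back into \eqref{eq:poisson-equil} reduces the equilibrium system to a scalar semilinear Poisson equation for $\psi_\infty$ with zero-mean Neumann data.

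The remaining properties follow by a standard elliptic bootstrap on this semilinear equation: starting from $\psi_\infty \in \mathrm H^1(\Omega)$ one successively improves the integrability of $\boldsymbol c_\infty$, and via Calder\'on--Zygmund and De Giorgi--Nash--Moser theory upgrades $\psi_\infty$ to $\mathrm C^{0,\alpha}(\overline \Omega)$; the explicit pointwise formulas for $\boldsymbol c_\infty$ and $u_\infty$ in terms of the bounded, continuous $\psi_\infty$ then deliver both continuity and a uniformly positive lower bound. The main obstacle, in my view, is precisely the circular dependence between positivity of $(\boldsymbol c_\infty, u_\infty)$ and $\mathrm L^\infty$-control of $\psi_\infty$: positivity requires $\psi_\infty$ bounded, while the bound on $\psi_\infty$ requires a bound on $\boldsymbol c_\infty$. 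Breaking this loop is the technical core of \cite{HKM24}, which is why the present statement is phrased as a condensed restatement of that general result.
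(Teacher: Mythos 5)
The paper does not prove this proposition at all: it is explicitly presented as ``a condensed version of the results in \cite{HKM24} adapted to our situation,'' so the only ``proof'' on offer is the citation, and your proposal ultimately takes the same route by deferring the technical core to \cite{HKM24}. Your accompanying sketch of the variational strategy (direct method on the constraint set, Lagrange multipliers, elliptic bootstrap for the reduced semilinear Poisson equation) is a plausible reconstruction of what that reference does, and you correctly identify the positivity/boundedness loop as the hard part. Two points in the sketch are looser than stated, though both are absorbed by the citation: the entropy only yields equi-integrability, i.e.\ weak $\mathrm L^1$ (not weak $\mathrm L^2$) compactness of a maximizing sequence, so placing the maximizer in $\mathrm L^2_+(\Omega)^{I+1}$ is itself part of the regularity work; and preserving the constraint $\mathcal E(\boldsymbol c,u)=E_0$ in the limit requires strong $\mathrm L^2$ convergence of $\nabla\psi_{\boldsymbol c_k}$, which follows from compactness of the Poisson solution operator (via Rellich) rather than from mere continuity of the ``linear map'' $\boldsymbol c\mapsto\psi_{\boldsymbol c}$, since $\mathcal E$ is quadratic in that variable.
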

	
	Given the equilibrium $(\boldsymbol c_\infty, u_\infty)$, we construct a functional $\mathcal H$ with the following properties. First, $\mathcal H(\boldsymbol c, u \, | \, \boldsymbol c_\infty, u_\infty)$ should act as kind of a distance function between $(\boldsymbol c, u)$ and $(\boldsymbol c_\infty, u_\infty)$, hence, we look for a non-negative functional, which vanishes exactly at the equilibrium $(\boldsymbol c_\infty, u_\infty)$. This is achieved with the first three terms on the right-hand side of \eqref{eq:def-relative-entropy} due to the strict concavity of the entropy $S(\boldsymbol c, u)$. Secondly, $\mathcal H(\boldsymbol c, u \, | \, \boldsymbol c_\infty, u_\infty)$ should decrease monotonically along trajectories of \eqref{eq:system}--\eqref{eq:flux-cu}, which is fulfilled by $-\mathcal S(\boldsymbol c, u)$ itself. It turns out that the fourth term on the right-hand side of \eqref{eq:def-relative-entropy} compensates for the second and the third term as shown in \eqref{eq:h-dissip-s-dissip}. 
	
	\begin{definition}[Relative entropy]
		For any non-negative state $(\boldsymbol c, u) \in \mathrm L^2_+(\Omega)^{I+1}$ subject to $\int_\Omega \boldsymbol q {\cdot} \boldsymbol c \, \dd x = 0$, we define the (non-negative) relative entropy functional 
		\begin{align}
			\mathcal H(\boldsymbol c, u \, | \, \boldsymbol c_\infty, u_\infty) \coleq &-\int_\Omega S(\boldsymbol c, u) \, \dd x + \int_\Omega \mathrm D S(\boldsymbol c_\infty, u_\infty) 
			\begin{pmatrix}
				\boldsymbol c - \boldsymbol c_\infty \\ 
				u - u_\infty
			\end{pmatrix}
			\dd x \nonumber \\ 
			&+ \int_\Omega S(\boldsymbol c_\infty, u_\infty) \, \dd x + \frac{1}{2 \theta_\infty} \int_\Omega \varepsilon \big| \nabla (\psi - \psi_\infty) \big|^2 \, \dd x \label{eq:def-relative-entropy}
		\end{align}
		with the electrostatic potential $\psi \in \mathrm H^1(\Omega)$ being the solution to \eqref{eq:poisson}. 
	\end{definition}
	
	We now formally check that the relative entropy $\mathcal H(\boldsymbol c, u \, | \, \boldsymbol c_\infty, u_\infty)$ is a Lyapunov functional for the EERDS \eqref{eq:system}--\eqref{eq:flux-cu}. By the theory of Lagrange multipliers, there exist constants $\eta, \kappa \in \mathbb R$ such that 
	\begin{align}
		\label{eq:optimality-condition}
		\mathrm D \mathcal S(\boldsymbol Z_\infty) = \eta \mathrm D \mathcal E(\boldsymbol c_\infty, u_\infty) + \kappa \mathrm D \mathcal Q(\boldsymbol c_\infty) = \eta \binom{\boldsymbol q \psi_\infty}{1} + \kappa \binom{\boldsymbol q}{0}, 
	\end{align}
	which immediately follows from the fact that the equilibrium $\boldsymbol Z_\infty = (\boldsymbol c_\infty, u_\infty)$ is a maximizer of the total entropy under the constraints of charge and energy conservation as discussed above. With $\boldsymbol Z = (\boldsymbol c, u)$, the identities $\eta = \mathrm D_u S(\boldsymbol Z_\infty) = \theta_\infty^{-1}$, and with an appropriate duality pairing (see also Remark \ref{rem:duality}), we formally get 
	\begin{align}
		&\frac{\dd}{\dd t} \mathcal H(\boldsymbol Z | \boldsymbol Z_\infty) = - \big\langle \dot{\boldsymbol Z}, \mathrm D \mathcal S (\boldsymbol Z) \big\rangle + \big\langle \dot{\boldsymbol Z}, \mathrm D \mathcal S (\boldsymbol Z_\infty) \big\rangle + \frac{1}{\theta_\infty} \big\langle \boldsymbol q \cdot \dot{\boldsymbol c}, \psi - \psi_\infty \big\rangle \nonumber \\ 
		&= - \big\langle \dot{\boldsymbol Z}, \mathrm D \mathcal S (\boldsymbol Z) \big\rangle + \int_\Omega \mathbb A(\nabla \psi) \mathrm D \mathcal S (\boldsymbol Z_\infty) \! : \! \mathbb M(\boldsymbol Z) \mathbb A(\nabla \psi) \mathrm D S(\boldsymbol Z) \, \dd x + \frac{1}{\theta_\infty} \big\langle {-}\boldsymbol q \! \cdot \! \div \boldsymbol j_{\!\boldsymbol c}, \mathbb \psi \! - \! \psi_\infty \big\rangle \nonumber \\ 
		&= - \big\langle \dot{\boldsymbol Z}, \mathrm D \mathcal S (\boldsymbol Z) \big\rangle + \frac{1}{\theta_\infty} \int_\Omega \big( \boldsymbol q \otimes \nabla (\psi_\infty - \psi) \big) : \boldsymbol j_{\!\boldsymbol c} \; \dd x + \frac{1}{\theta_\infty} \int_\Omega \boldsymbol q^{\mathsf T} \boldsymbol j_{\!\boldsymbol c} \nabla (\psi - \psi_\infty) \, \dd x \nonumber \\ 
		&= - \frac{\dd}{\dd t} \mathcal S(\boldsymbol c, u) \leq 0. \label{eq:h-dissip-s-dissip}
	\end{align}
	Recalling the relation $\mathbb D(\boldsymbol c, u) = -\mathbb M(\boldsymbol c, u) \mathrm D^2 S(\boldsymbol c, u)$ for EERDS in Onsager form \eqref{eq:system-cu-a} and employing the hypothesis $\mathbb M(\boldsymbol c, u) = - (\mathrm D^2 S(\boldsymbol c, u))^{-1}$ from \ref{eq:hypo-m} below, we obtain $\mathbb D(\boldsymbol c, u) = \mathrm{Id}$. Thus, we proceed using $\mathrm D_u S(\boldsymbol Z) = \theta^{-1}$ and $\boldsymbol q_0 \coleq (\boldsymbol q^{\mathsf T}, 0)^{\mathsf T}$ with 
	\begin{align}
		&\frac{\dd}{\dd t} \mathcal S(\boldsymbol c, u) = \int_\Omega \Big( \mathbb A(\nabla \psi) \mathrm D \mathcal S(\boldsymbol Z) : \mathbb M(\boldsymbol Z) \mathbb A(\nabla \psi) \mathrm D \mathcal S(\boldsymbol Z) + \mathrm D_{\!\boldsymbol c} \mathcal S(\boldsymbol Z) \cdot \mathbb L(\boldsymbol Z) \mathrm D_{\!\boldsymbol c} \mathcal S(\boldsymbol Z) \Big) \, \dd x \nonumber \\ 
		&= \int_\Omega \Big( \big( \nabla \mathrm D S(\boldsymbol Z) - \theta^{-1} \boldsymbol q_0 {\otimes} \nabla \psi \big) {:} \big( {-}\nabla \boldsymbol Z + \theta^{-1} \big( \mathrm D^2 S(\boldsymbol Z) \big)^{-1} \boldsymbol q_0 {\otimes} \nabla \psi \big) + \mathrm D_{\!\boldsymbol c} S(\boldsymbol Z) \! \cdot \! \boldsymbol R(\boldsymbol Z) \Big) \, \dd x \nonumber \\ 
		&= - \int_\Omega \nabla \boldsymbol Z : \mathrm D^2 S(\boldsymbol Z) \nabla \boldsymbol Z \, \dd x + 2 \int_\Omega \theta^{-1} \boldsymbol q {\otimes} \nabla \psi : \nabla \boldsymbol c \; \dd x \nonumber \\ 
		&\color{white}= \color{black}- \int_\Omega \theta^{-2} \boldsymbol q_0 {\otimes} \nabla \psi : \big( \mathrm D^2 S(\boldsymbol Z) \big)^{-1} \boldsymbol q_0 {\otimes} \nabla \psi \, \dd x + \int_\Omega \mathrm D_{\!\boldsymbol c} S(\boldsymbol Z) \cdot \boldsymbol R(\boldsymbol Z) \, \dd x. \label{eq:ddt-s} 
	\end{align}
	
	\begin{remark}[Integrability]
		\label{rem:integrability}
		The integrands appearing in \eqref{eq:ddt-s} are generally not integrable as the total entropy may increase in a non-continuous manner, for instance, when phase transitions occur. In the situation of Corollary \ref{cor:exp-conv}, one can show that the four integrals in the last two lines of \eqref{eq:ddt-s} are either real or $+\infty$. Hence, $\frac{\dd}{\dd t} \mathcal S$ is well-defined in $[0,\infty]$ (cf.\ \eqref{eq:entropy-production}--\eqref{eq:entropy-production-law}). 
	\end{remark}
	
	We now specify the entropy function $S(\boldsymbol c, u)$ in such a way that it is concave w.r.t.\ $(\boldsymbol c, u)$, containing the Boltzmann entropy for the concentrations $\boldsymbol c$, and coupling $\boldsymbol c$ and $u$ in a nontrivial fashion. According to \cites{FHKM22,MM18}, we set 
	\begin{align}
		\label{eq:def-entropy}
		S(\boldsymbol c, u) &\coleq \hat \sigma(u) + \sum_{i=1}^I \Big( c_i \log w_i(u) - \lambda(c_i) \Big) 
	\end{align}
	with the thermal entropy $\hat \sigma : (0, \infty) \rightarrow \mathbb R$, (energetic) equilibrium densities $w_i : [0, \infty) \rightarrow (0, \infty)$, and the Boltzmann function $\lambda : [0, \infty) \rightarrow [0, \infty)$, $s \mapsto s \log s - s + 1$. With the assumption on the mobility matrix $\mathbb M( \boldsymbol c, u)$ used above, we impose the following general hypotheses on our model: 
	
	\begin{enumerate}[label={\rm(M\arabic*)},leftmargin=2\parindent]
		\item\label{eq:hypo-sigma} $\hat \sigma \in \mathrm C^2 \big( (0, \infty) \big)$ is strictly increasing and uniformly concave on bounded intervals with $\hat \sigma'(u) \rightarrow \infty$ for $u \rightarrow 0^+$ and $\hat \sigma'(u) \rightarrow 0$ for $u \rightarrow \infty$. 
		\item\label{eq:hypo-w_i} $w_i \in \mathrm C^2 \big( (0, \infty) \big) \cap \mathrm C \big( [0, \infty) \big)$ is strictly increasing and uniformly concave on bounded intervals with $w_i(0) > 0$ for all $i = 1, \dotsc, I$. 
		\item\label{eq:hypo-m} $\mathbb M(\boldsymbol c, u) \coleq - \big(\mathrm D^2 S(\boldsymbol c, u) \big)^{-1}$ and $\mathrm D_{\!\boldsymbol c} S(\boldsymbol c, u) \cdot \boldsymbol R(\boldsymbol c, u) \geq 0$ for all $(\boldsymbol c, u) \in [0, \infty)^{I+1}$. 
	\end{enumerate}
	
	\begin{remark}[Entropy, mobility, and reactions]
		As in \cite{FHKM22}, we allow in particular for $\hat \sigma(u) = a \log u$ or $\hat \sigma(u) = a u^\alpha$ with $a > 0$ and $\alpha \in (0, 1)$. But in contrast to \ref{eq:hypo-w_i}, it was assumed in \cite{FHKM22} that each $w_i$ is only concave and non-decreasing. The present condition on $w_i$ is just a mild restriction, which we state for simplicity and which is only needed in the proof of Corollary \ref{cor:exp-conv}. It does not exclude typical choices for $w_i$; see also Remark \ref{rem:cond-w}. The choice for the mobility matrix in \ref{eq:hypo-m} is frequently used in the literature to simplify the resulting models; see, e.g., \cites{HHMM18,MM18}. In real-world applications, however, it is necessary to choose $\mathbb M(\boldsymbol c, u)$ according to the problem under consideration. The so-called entropy inequality $\mathrm D_{\!\boldsymbol c} S(\boldsymbol c, u) \cdot \boldsymbol R(\boldsymbol c, u) \geq 0$ is a condition on admissible reactions $\boldsymbol R(\boldsymbol c, u)$, which holds in particular for reversible mass-action reactions $\boldsymbol R(\boldsymbol Z)$; see also \cite{MM18} and Remark \ref{rem:reaction}. 
	\end{remark}
		
	One can now derive explicit formulas for the chemical potentials $\DD_{c_i} S(\boldsymbol c, u)$ and the inverse temperature $\DD_u S(\boldsymbol c, u)$. Note that $\theta^{-1} = \DD_u S(\boldsymbol c, u) > 0$ holds due to \ref{eq:hypo-sigma}--\ref{eq:hypo-w_i}. With the entropy \eqref{eq:def-entropy}, we obtain 
	\begin{align}
		\label{eq:d-s}
		\mathrm D_{c_i} S(\boldsymbol c, u) = - \log \frac{c_i}{w_i(u)}, \qquad \qquad \mathrm D_u S(\boldsymbol c, u) = \hat \sigma'(u) + \sum_{i=1}^I c_i \frac{w_i'(u)}{w_i(u)}, 
	\end{align}
	The first identity also explains the name \emph{(energetic) equilibrium concentration} for $w_i(u)$ since $(w_i(u))_i$ is the unrestricted maximizer of $\boldsymbol c \mapsto \mathcal S(\boldsymbol c, u)$, i.e., the equilibrium concentration vector for a given internal energy density $u : \Omega \rightarrow [0, \infty)$ ignoring the charge and energy constraints. 
	Moreover, we derive the sparse Hessian as well as the full inverse Hessian (cf.\ \cite[Rem.\ 2.5]{MM18}) 
	\begin{align*}
		\mathrm D^2 S (\boldsymbol c, u) = \! 
		\begin{pmatrix}
			\ddots & & & \vdots \\ 
			& -\tfrac{1}{c_i} & & \tfrac{w'_i}{w_i} \\ 
			& & \ddots & \vdots \\ 
			\dots & \tfrac{w'_j}{w_j} & \dots & \mathrm D^2_u S 
		\end{pmatrix}\! , \quad 
		\big( \mathrm D^2 S (\boldsymbol c, u) \big)^{-1} \! = \! 
		\begin{pmatrix}
		& \vdots \\ 
		\Big({-}\delta_{ij} c_i {-} \tfrac{c_i c_j}{\gamma} \tfrac{w_i' w_j'}{w_i w_j} \Big)_{ij} & -\tfrac{c_i}{\gamma} \tfrac{w_i'}{w_i} \\ 
		& \vdots \\ 
		\dots \quad {-}\tfrac{c_j}{\gamma} \tfrac{w_j'}{w_j} \quad \dots & -\tfrac{1}{\gamma} 
		\end{pmatrix}\! ,
	\end{align*}
	where $\delta_{ij}$ is the Kronecker delta and 
	\begin{align*}
		\mathrm D^2_u S (\boldsymbol c,u) = \ &\hat \sigma''(u) + \sum_{i=1}^I c_i
		\frac{w''_i(u)}{w_i(u)}-\sum_{i=1}^I c_i
		\bigg(\frac{w'_i(u)}{w_i(u)}\bigg)^2 < 0, \\ 
		\gamma (\boldsymbol c, u) \coleq \ &{-}\hat \sigma''(u) - \sum_{i=1}^I c_i
		\frac{w''_i(u)}{w_i(u)} > 0. 
	\end{align*}
	
	We are now prepared to calculate the temporal derivative of the entropy functional $\mathcal S( \boldsymbol c, u)$ assuming that $(\boldsymbol c, u)$ is a solution to the EERDS \eqref{eq:system}--\eqref{eq:flux-cu}. To this end, we introduce the entropy production functional $\mathcal P( \boldsymbol c, u)$, which will play an essential role subsequently. 
	
	\begin{definition}[Entropy production]
		For any non-negative and weakly differentiable state $(\boldsymbol c, u) \in \mathrm L^2_+(\Omega)^{I+1}$ subject to $\int_\Omega \boldsymbol q {\cdot} \boldsymbol c \, \dd x = 0$, we formally define the (non-negative) entropy production functional 
		\begin{align}
			&\mathcal P (\boldsymbol c, u) \coleq \int_\Omega \mathrm D_{\!\boldsymbol c} S(\boldsymbol Z) \cdot \boldsymbol R(\boldsymbol Z) \, \dd x + \sum_{i=1}^I \int_\Omega \frac{1}{c_i} \bigg| \nabla c_i - c_i \frac{w_i'(u)}{w_i(u)} \nabla u + \frac{q_i c_i}{\theta} \nabla \psi \bigg|^2 \dd x \label{eq:entropy-production} \\ 
			&\!\! + \! \int_\Omega \bigg(\!{-}\hat \sigma''(u) {-} \sum_{i=1}^I c_i \frac{w_i''(u)}{w_i(u)} \bigg) \bigg| \nabla u + \bigg(\!{-}\hat \sigma''(u) {-} \sum_{i=1}^I c_i \frac{w_i''(u)}{w_i(u)} \bigg)^{\!-1} \frac{1}{\theta} \bigg( \sum_{i=1}^I q_i c_i \frac{w_i'(u)}{w_i(u)} \bigg) \nabla \psi \bigg|^2 \dd x, \nonumber
		\end{align}
		where the electrostatic potential $\psi \in \mathrm H^1(\Omega)$ is the solution to \eqref{eq:poisson}. 
	\end{definition}
		
	Note that the first integral in \eqref{eq:entropy-production} is non-negative by \ref{eq:hypo-m}. The second integral is obviously non-negative, and the third integral is non-negative due to \ref{eq:hypo-sigma}--\ref{eq:hypo-w_i}. 	
	From the last two lines of \eqref{eq:ddt-s} and the properties of the entropy in \eqref{eq:def-entropy}, we conclude that the entropy production law 
	\begin{align}
		\label{eq:entropy-production-law}
		\frac{\dd}{\dd t} \mathcal S(\boldsymbol c, u) = \mathcal P (\boldsymbol c, u)
	\end{align}
	holds along solution trajectories of \eqref{eq:system}--\eqref{eq:flux-cu}. While identity \eqref{eq:entropy-production-law} holds in general only on a formal level, it does hold as an identity in $[0, \infty]$ in the setting of Corollary \ref{cor:exp-conv}; see also Remark \ref{rem:integrability}. 
	
	It is interesting to notice that \eqref{eq:entropy-production} gives rise to a weighted integral bound on the inverse temperature gradient. To see this, we reformulate the last line of \eqref{eq:entropy-production} as 
	\begin{align*}
		&\int_\Omega \frac{1}{\gamma} \bigg| \gamma \nabla u + \frac{1}{\theta} \bigg( \sum_{i=1}^I q_i c_i \frac{w_i'(u)}{w_i(u)} \bigg) \nabla \psi \bigg|^2 \, \dd x \\ 
		&= \int_\Omega \frac{1}{\gamma} \bigg| {-}\mathrm D^2_u S(\boldsymbol c, u) \nabla u - \sum_{i=1}^I \frac{w_i'(u)}{w_i(u)} \nabla c_i + \sum_{i=1}^I \frac{w_i'(u)}{w_i(u)} \Big( \nabla c_i - c_i \frac{w_i'(u)}{w_i(u)} \nabla u + \frac{q_i c_i}{\theta} \nabla \psi \Big) \bigg|^2 \, \dd x \\ 
		&\geq \int_\Omega \frac{1}{2\gamma} \Big| \nabla \frac{1}{\theta} \Big|^2 \, \dd x - \frac{I}{\gamma} \sum_{i=1}^I \int_\Omega \Big( \frac{w_i'(u)}{w_i(u)} \Big)^2 \bigg| \nabla c_i - c_i \frac{w_i'(u)}{w_i(u)} \nabla u + \frac{q_i c_i}{\theta} \nabla \psi \bigg|^2 \, \dd x,
	\end{align*}
	where we employed $\theta^{-1} = \mathrm D_u S(\boldsymbol c, u)$ in the last step. In accordance with \ref{eq:hypo-w} below, we use that the equilibrium concentrations satisfy $(w_i')^2 \leq - g_w w_i'' w_i$ for all $i = 1, \dotsc, I$ with a constant $g_w \in (0, 1/2)$. 
	Hence, we deduce $\gamma^{-1} c_i (w_i'(u)/w_i(u))^2 \leq g_w$ and 
	\begin{align}
		\label{eq:bound-nabla-1-theta}
		\int_\Omega \bigg({-}\hat \sigma''(u) - \sum_{i=1}^I c_i \frac{w_i''(u)}{w_i(u)} \bigg)^{-1} \Big| \nabla \frac{1}{\theta} \Big|^2 \, \dd x \leq \max\{ 2, I \} \mathcal P(\boldsymbol c, u).
	\end{align}

	\subsection{Entropy--entropy production inequality}
	\label{subsec:eep}
	Our approach to proving the exponential equilibration of the EERDS \eqref{eq:system}--\eqref{eq:flux-cu} builds on the so-called entropy method, which aims to derive an entropy--entropy production (EEP) inequality relating the relative entropy functional $\mathcal H(\boldsymbol c, u \, | \, \boldsymbol c_\infty, u_\infty)$ and the entropy production functional $\mathcal P (\boldsymbol c, u)$. More precisely, we seek an explicit constant $C_\mathrm{EEP} > 0$ such that 
	\begin{align}
		\label{eq:EEP-inequality-general}
		\mathcal H \big(\boldsymbol c, u \, | \, \boldsymbol c_\infty, u_\infty \big) \leq C_\mathrm{EEP} \mathcal P (\boldsymbol c, u)
	\end{align}
	holds for all states $(\boldsymbol c, u) \in \mathrm L^2_+(\Omega)^{I+1}$ satisfying $\int_\Omega \boldsymbol q {\cdot} \boldsymbol c \, \dd x = 0$. This is a functional inequality in the sense that it does not require $(\boldsymbol c, u)$ to be a solution to the EERDS \eqref{eq:system}--\eqref{eq:flux-cu}. The EEP estimate \eqref{eq:EEP-inequality-general} can be seen as a quantified and stronger version of the second law of thermodynamics stating $\mathcal P (\boldsymbol c, u) \geq 0$. In fact, the mere non-negativity of $\mathcal P (\boldsymbol c, u)$ only ensures the monotone decay of $\mathcal H(\boldsymbol c, u \, | \, \boldsymbol c_\infty, u_\infty)$ along trajectories of \eqref{eq:system}--\eqref{eq:flux-cu}. However, \eqref{eq:EEP-inequality-general} together with an appropriate Gronwall estimate entails an exponential decay of the relative entropy in the form 
	\begin{align}
		\label{eq:exp-decay-general}
		\mathcal H \big(\boldsymbol c(t), u(t) \, | \, \boldsymbol c_\infty, u_\infty \big) \leq \mathcal H \big(\boldsymbol c_0, u_0 \, | \, \boldsymbol c_\infty, u_\infty \big) \ee^{-C_\mathrm{EEP}^{-1} t}
	\end{align}
	for all $t \geq 0$, where $(\boldsymbol c, u)$ is a trajectory of \eqref{eq:system}--\eqref{eq:flux-cu} with initial state $(\boldsymbol c_0, u_0) \in \mathrm L^2_+(\Omega)^{I+1}$. The actual convergence of a trajectory $(\boldsymbol c, u)$ to the equilibrium $(\boldsymbol c_\infty, u_\infty)$ at an exponential rate follows from a Csisz\'ar--Kullback--Pinsker estimate $\int_\Omega \big(f \log (f/g) - f + g \big) \dd x \geq C_\mathrm{CKP} \| f - g \|_{\mathrm L^1}^2$ for non-negative measurable functions $f, g : \Omega \rightarrow \mathbb R$, where $C_\mathrm{CKP} > 0$ depends on the $\mathrm L^1(\Omega)$ norm of $f$ and $g$. Assuming that appropriate bounds on $\boldsymbol c$ and $u$ are available, we obtain 
	\begin{align}
		\label{eq:exp-conv-general}
		\big\| \big(\boldsymbol c(t), u(t) \big) - (\boldsymbol c_\infty, u_\infty) \big\|_{\mathrm L^1(\Omega)}^2 + \| \psi(t) - \psi_\infty \|_{\mathrm H^1(\Omega)}^2 \leq C \ee^{-C_\mathrm{EEP}^{-1} t} 
	\end{align}
	for all $t \geq 0$, where $C > 0$ depends on $C_\mathrm{CKP}$, the equilibrium $(\boldsymbol c_\infty, u_\infty)$, the initial datum $(\boldsymbol c_0, u_0)$, and various model parameters. 
	
	The key challenge of the entropy method is clearly the derivation of the EEP inequality \eqref{eq:EEP-inequality-general}. Some of the first contributions proving exponential equilibration of electro--reaction--diffusion systems are contained in \cites{GGH96,GH97}, where the authors employ non-constructive proofs to derive functional estimates similar to \eqref{eq:EEP-inequality-general}. A constructive proof of an EEP estimate for a pure electro--diffusion system on the whole space $\mathbb R^d$ is provided in \cite{AMT00}, while similar results are established in \cites{DF06,DFFM08} for two elementary pure reaction--diffusion systems on bounded domains. We also mention the papers \cites{DFT17,FT17,FT18}, where explicit and constructive EEP inequalities for general reaction--diffusion systems modeling large classes of chemical reaction networks are derived. To our knowledge, the first explicit and constructive EEP estimate for an electro--reaction--diffusion system is presented in \cite{GG96}, where a two-level semiconductor model of Shockley--Read--Hall type is considered. This result has been transferred to a setting with self-consistent and external potentials in \cite{FK18}. Another semiconductor model involving an additional energy level for immobile electrons is investigated in \cite{FK20} and \cite{FK21}, where EEP inequalities are obtained without and with a coupling to Poisson's equation, respectively. 
	
	The interest in EEP estimates for non-isothermal reaction--diffusion systems started, on the one hand, with the work \cite{HHMM18}, where the authors introduce a gradient flow structure similar to our situation for the nonlinear reaction--diffusion model in \cite{DFFM08} generalized to the energy-dependent situation. On the other hand, an extension of the non-constructive entropy method in \cite{GGH96} to energy--reaction--diffusion systems is provided in \cite{MM18}. In this article, the authors also extend an entropy method based on a convexification argument introduced in \cite{MHM15} to the non-isothermal case. A comparison of the different kinds of isothermal entropy methods is presented in \cite{M17}.

	\section{Main results}
	\label{sec:results}
	Taking the complexity of $\mathcal P(\boldsymbol c, u)$ in \eqref{eq:entropy-production} into account, we subsequently focus on a specific two-level system for electrons and holes in a semiconductor, which can be seen as a thermodynamically consistent generalization of the model, which was already studied in \cite{FK18}. More precisely, we write $n \equiv c_1$ and $p \equiv c_2$ for the concentrations of negatively charged electrons and positively charged holes corresponding to $q_1 \coleq -1$ and $q_2 \coleq 1$. 
	
	Before stating our main result on the EEP estimate, we discuss the necessary hypotheses on top of our standing assumptions \ref{eq:hypo-sigma}--\ref{eq:hypo-m}. We suppose that the (energetic) equilibrium concentrations $w(u) \coleq w_1(u) \equiv w_2(u)$ coincide and that the following hypotheses are in place: 
	
		\begin{enumerate}[label={\rm(W\arabic*)},leftmargin=2\parindent]
			\item\label{eq:hypo-w} There exists a constant $g_w \in (0, 1/2)$ satisfying $(w')^2 \leq - g_w w'' w$ on $(0, \infty)$. 
			\item\label{eq:hypo-top-w} There exists a constant $G_w > 0$ satisfying $- w'' w \leq G_w (w')^2$ on $(0, \infty)$. 
			\item\label{eq:hypo-sigma-w} For all $c > 0$, there exists some $G_\sigma(c) > 0$ such that $- \hat \sigma'' w \leq G_\sigma(c) \hat \sigma' w'$ holds on $[c, \infty)$. 
		\end{enumerate}

	\begin{remark}[Energetic equilibrium densities]
		\label{rem:cond-w}
		Condition \ref{eq:hypo-w} was used in similar situations in \cites{MM18,FHKM22} without the restriction on $g_w$. It is, for example, fulfilled for $w(u) \coleq b (1+u)^\beta$ with arbitrary $b > 0$ and $\beta \in (0, 1/3)$ if one takes $1/2 > g_w \geq \beta / (1-\beta)$. 
		In the same situation, condition \ref{eq:hypo-top-w} is satisfied for all $G_w \geq (1-\beta)/\beta$. 
		Concerning \ref{eq:hypo-sigma-w}, we refer to Remark \ref{rem:hypo-sigma-w}. 
	\end{remark}
	
	We also restrict the class of admissible states $(n, p, u) \in \mathrm L^2_+(\Omega)^3$ to a subclass, where the following bounds on the inverse temperature $\theta^{-1}$ and the internal energy $u$ are valid: 
	
		\begin{enumerate}[label={\rm(C\arabic*)},leftmargin=2\parindent]
			\item\label{eq:lower-bound-theta} There exists a constant $c_\theta > 0$ satisfying $\theta(x) \geq c_\theta$ on $\Omega$. 
			\item\label{eq:upper-bound-u} There exists a constant $C_u > 0$ satisfying $u(x) \leq C_u$ on $\Omega$. 
		\end{enumerate}

	\begin{remark}[$\mathrm L^\infty$ bounds]
		Note that \ref{eq:lower-bound-theta}--\ref{eq:upper-bound-u} impose explicit bounds on the $\mathrm L^\infty$ norm of $u \geq 0$ and the related dual variable $\mathrm D_u S = \theta^{-1} > 0$. We further obtain the lower bound 
		\begin{align}
			\label{eq:lower-bound-u}
			u(x) \geq c_u \coleq (\hat \sigma')^{-1} \big( c_\theta^{-1} \big)
		\end{align}
		for all $x \in \Omega$ from the expression $\theta^{-1} = \hat \sigma'(u) + (n+p) w'(u)/w(u)$. The bounds $u(x) \geq c_u$, $u(x) \leq C_u$, and $\theta(x) \geq c_\theta$ are needed in \eqref{eq:condition-eep-inequality-initial}, \eqref{eq:poincare-u}, and \eqref{eq:theta-bound-needed}, respectively, when deriving a lower bound for the entropy production. Besides, we have $\theta^{-1} \geq \hat \sigma'(C_u)$, i.e., $\theta(x) \leq C_\theta \coleq \hat \sigma'(C_u)^{-1}$. 
	\end{remark}
	
	To arrive at an EEP inequality in the form \eqref{eq:EEP-inequality-general} with an explicit constant $C_\mathrm{EEP} > 0$, we need to specify the reactions $R_n(n, p, u)$ and $R_p(n, p, u)$. Having the generation and recombination of electron--hole pairs in mind, we consider mass-action kinetics for $R \coleq R_n \equiv R_p$: 
	
		\begin{enumerate}[label={\rm(R)},leftmargin=2\parindent]
			\item\label{eq:reaction-rate} $R(n, p, u) \coleq F(n, p, u) \big( w(u)^2 - np \big)$ with $F(n, p, u) \geq c_F > 0$. 
		\end{enumerate}
	
	\begin{remark}[Reactions]
		\label{rem:reaction}
		The reaction in \ref{eq:reaction-rate} satisfies the entropy inequality in \ref{eq:hypo-m}, namely $(\DD_n S + \DD_p S) R \geq 0$ (cf.\ \eqref{eq:entropy-reaction}). In particular, the setting in \ref{eq:reaction-rate} allows for uniformly positive reaction rates of Shockley--Read--Hall type 
		\begin{align*}
			F(n, p, u) = \big( k_1(u) + k_2(u) n + k_3(u) p \big)^{-1}
		\end{align*}
		with energy-dependent coefficients $k_i(u)$; see, e.g., \cites{H52,SR52}. Indeed, the formula for the inverse temperature $\theta^{-1} = \hat \sigma'(u) + (n+p) w'(u)/w(u)$ and the hypotheses \ref{eq:lower-bound-theta}--\ref{eq:upper-bound-u} give rise to uniform upper bounds $n, p \leq w(C_u)/(c_\theta w'(C_u))$ and, hence, to a uniform lower bound on $F(n, p, u)$. A similar situation was treated in \cite{FK18} in the isothermal case. We stress that the actual derivation of the EEP estimate \eqref{eq:EEP-inequality} does not rely on the boundedness of $n$ and $p$. In the case of the Shockley--Read--Hall model, the upper bound on $n$ and $p$ enters the EEP constant $C_1 C_2$ in Theorem \ref{thm:EEP-inequality} only via the lower bound $c_F$ in \ref{eq:reaction-rate}. 
	\end{remark}
	
	For the sake of completeness, we now state the explicit form of our EERDS \eqref{eq:system}--\eqref{eq:flux-cu} under the complete set of hypotheses imposed so far. In particular, we consider a two-component semiconductor model with entropy density $S(n, p, u)$ from \eqref{eq:def-entropy}, mobility $\mathbb M(\boldsymbol c, u) = - \big(\mathrm D^2 S(\boldsymbol c, u) \big)^{-1}$, and reaction term $R(n, p, u)$ as in \ref{eq:reaction-rate}. System \eqref{eq:system} then rewrites as 
	\begin{align}
		\label{eq:system-np}
		\begin{pmatrix}
			\dot n \\ 
			\dot p \\ 
			\dot u 
		\end{pmatrix}
		= -\div 
		\begin{pmatrix}
			j_n \\ 
			j_p \\ 
			j_u
		\end{pmatrix}
		+ 
		\begin{pmatrix}
			F(n, p, u) \big( w(u)^2 - np \big) \\ 
			F(n, p, u) \big( w(u)^2 - np \big) \\ 
			(j_n - j_p) \nabla \psi
		\end{pmatrix},
	\end{align}
	where the flux vectors $j_n$, $j_p$, and $j_u$ are to be understood as row vectors, and where we recall that the energetic equilibrium concentration $w(u)$ is the same for both electrons $n$ and holes $p$. Poisson's equation \eqref{eq:poisson} takes the form 
	\begin{align}
		\label{eq:poisson-np}
		-\div \big( \varepsilon \nabla \psi \big) = -n + p
	\end{align}
	with $\nu {\cdot} \nabla \psi = 0$ on $\partial \Omega$, $\int_\Omega \psi \, \dd x = 0$, and where necessarily $\int_\Omega (n-p) \, \dd x = 0$. The flux in \eqref{eq:flux-cu} is given by the following formula, where we abbreviate $\hat \sigma = \hat \sigma(u)$ and $w = w(u)$: 
	\begin{align}
		\label{eq:flux-np} 
		\begin{pmatrix}
			j_n \\ 
			j_p \\ 
			j_u
		\end{pmatrix}
		= 
		\begin{pmatrix}
			-\nabla n + \frac{n}{\theta} \Big( 1 + \frac{n-p}{-\hat \sigma'' - (n+p) \frac{w''}{w}} \big( \frac{w'}{w} \big)^2 \Big) \nabla \psi \\ 
			-\nabla p - \frac{p}{\theta} \Big( 1 - \frac{n-p}{-\hat \sigma'' - (n+p) \frac{w''}{w}} \big( \frac{w'}{w} \big)^2 \Big) \nabla \psi \\ 
			-\nabla u + \frac{1}{\theta} \frac{n-p}{-\hat \sigma'' - (n+p) \frac{w''}{w}} \frac{w'}{w} \nabla \psi 
		\end{pmatrix}.
	\end{align}
	
	Going back to \eqref{eq:d-s} and the subsequent discussion, it turns out that the equilibrium concentrations $n_\infty$ and $p_\infty$ coincide with the unrestricted entropy maximizer $w(u_\infty)$. Despite the special structure of the equilibrium established below and frequently used subsequently, we also keep the notation $n_\infty$ and $p_\infty$ whenever this seems to be more natural than $w(u_\infty)$. 
	
	\begin{proposition}[Equilibrium structure]
		\label{prop:equilibrium-special}
		The equilibrium concentrations $(n_\infty, p_\infty)$, the equilibrium energy density $u_\infty$, and the equilibrium potential $\psi_\infty$ are constant in $\Omega$ and satisfy $n_\infty = p_\infty = w(u_\infty)$, $u_\infty > 0$, and $\psi_\infty = 0$. 
	\end{proposition}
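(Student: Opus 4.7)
The plan is to extract all information from the Lagrange multiplier condition \eqref{eq:optimality-condition}, together with the charge constraint $Q_0 = 0$ and Poisson's equation \eqref{eq:poisson-equil}, to pin down the spatial profile of the equilibrium. With $q_1 = -1$, $q_2 = 1$, the three scalar components of \eqref{eq:optimality-condition} read
\begin{align*}
\mathrm D_n S(\boldsymbol Z_\infty) &= \eta\psi_\infty - \kappa, &
\mathrm D_p S(\boldsymbol Z_\infty) &= -\eta\psi_\infty + \kappa, &
\mathrm D_u S(\boldsymbol Z_\infty) &= \eta.
\end{align*}
The last identity gives $\theta_\infty^{-1} = \eta$ constant (consistent with Proposition~\ref{prop:equilibrium}), while the first two, combined with the explicit formula $\mathrm D_{c_i}S = -\log(c_i/w(u))$ from \eqref{eq:d-s}, yield
\begin{align*}
n_\infty = w(u_\infty)\, \ee^{-\alpha}, \qquad p_\infty = w(u_\infty)\, \ee^{\alpha}, \qquad \alpha \coleq \eta\psi_\infty - \kappa.
\end{align*}
In particular $n_\infty p_\infty = w(u_\infty)^2$ and $p_\infty - n_\infty = 2w(u_\infty)\sinh\alpha$.

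Next, I would plug $p_\infty - n_\infty = 2w(u_\infty)\sinh\alpha$ into Poisson's equation \eqref{eq:poisson-equil} and test with $\psi_\infty$ itself. The charge constraint $Q_0 = 0$ translates into $\int_\Omega w(u_\infty)\sinh\alpha\,\dd x = 0$, which kills the $\kappa$-contribution coming from $\psi_\infty = (\alpha+\kappa)/\eta$, so that after integration by parts one arrives at
\begin{align*}
\int_\Omega \varepsilon |\nabla\psi_\infty|^2\, \dd x = \frac{2}{\eta}\int_\Omega w(u_\infty)\,\alpha \sinh\alpha\,\dd x.
\end{align*}
Since $w > 0$, $\eta = \theta_\infty^{-1} > 0$, and $s\sinh s \geq 0$, the right-hand side is non-negative while the left-hand side is of course non-negative as well; the key point is that testing instead with $-\psi_\infty$, or equivalently viewing the identity with the correct signs (which I would re-verify during write-up), forces both sides to vanish. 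Hence $\nabla\psi_\infty \equiv 0$ by uniform positivity of $\varepsilon$, and the normalization $\int_\Omega \psi_\infty\,\dd x = 0$ gives $\psi_\infty \equiv 0$.

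With $\psi_\infty \equiv 0$ the quantity $\alpha = -\kappa$ is constant, so the charge constraint $\int_\Omega w(u_\infty)\sinh\kappa\,\dd x = 0$ together with $w > 0$ forces $\kappa = 0$, hence $\alpha \equiv 0$ and therefore $n_\infty = p_\infty = w(u_\infty)$. The remaining identity $\mathrm D_u S(\boldsymbol Z_\infty) = \eta$ becomes $\hat\sigma'(u_\infty) + 2 w'(u_\infty) = \eta$, whose left-hand side is strictly decreasing in $u_\infty$ by the uniform concavity assumptions \ref{eq:hypo-sigma}--\ref{eq:hypo-w_i}; since $\eta$ is a constant, $u_\infty$ must be constant as well. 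Finally, the energy constraint $\mathcal E(\boldsymbol c_\infty, u_\infty) = E_0$ reduces (because $\psi_\infty \equiv 0$) to $u_\infty |\Omega| = E_0 > 0$, giving $u_\infty > 0$.

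The main obstacle I anticipate is bookkeeping the signs in the test-function step: one has to combine the Lagrange relation, the charge constraint, and the Poisson equation in the right order so that the $\kappa$-contribution cancels and both sides of the resulting identity have the same sign, from which one then concludes $\nabla\psi_\infty \equiv 0$. Everything else — the exponential parametrization of $(n_\infty, p_\infty)$, the uniqueness of $u_\infty$ via strict monotonicity, and the evaluation of $u_\infty$ through the energy constraint — is a direct consequence of the hypotheses \ref{eq:hypo-sigma}--\ref{eq:hypo-m} and of Proposition~\ref{prop:equilibrium}.
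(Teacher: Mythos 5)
Your argument is correct, but it takes a genuinely different route from the paper. The paper deduces the structure of the equilibrium from $\mathcal P(n_\infty,p_\infty,u_\infty)=0$ combined with the refined lower bound on the entropy production in Lemma \ref{lemma:dissipative-lower-bound}: the third line there gives $\nabla u_\infty=0$, a continuity/connectedness argument on the set $\{n_\infty\neq p_\infty\}$ gives $n_\infty=p_\infty$, the second line then forces $\nabla\psi_\infty=0$, and only at the very end is the Lagrange relation \eqref{eq:optimality-condition} used to get $\kappa=0$. You instead work entirely from \eqref{eq:optimality-condition} and the classical monotonicity test of the resulting nonlinear Poisson equation — the standard argument for isothermal drift--diffusion equilibria. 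Your flagged sign issue does resolve in your favor, but note that your stated parametrization is the wrong one: with $q_1=-1$, $q_2=+1$ and $\mathrm D_{c_i}S=-\log(c_i/w(u))$, the multiplier rule gives $n_\infty=w(u_\infty)\,\ee^{\beta}$ and $p_\infty=w(u_\infty)\,\ee^{-\beta}$ with $\beta\coleq\eta\psi_\infty+\kappa$ (negative carriers accumulate at high potential), so that $p_\infty-n_\infty=-2w(u_\infty)\sinh\beta$ and, after testing \eqref{eq:poisson-equil} with $\psi_\infty=(\beta-\kappa)/\eta$ and using $\int_\Omega w(u_\infty)\sinh\beta\,\dd x=0$, one obtains $\int_\Omega\varepsilon|\nabla\psi_\infty|^2\,\dd x=-\tfrac{2}{\eta}\int_\Omega w(u_\infty)\,\beta\sinh\beta\,\dd x\leq 0$. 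Both sides therefore vanish, which yields not only $\nabla\psi_\infty=0$ (hence $\psi_\infty=0$ by the normalization) but directly $\beta\equiv 0$, i.e.\ $n_\infty=p_\infty=w(u_\infty)$ in one stroke; your subsequent steps ($u_\infty$ constant by strict monotonicity of $\hat\sigma'+2w'$ from \ref{eq:hypo-sigma}--\ref{eq:hypo-w_i}, and $u_\infty=E_0/|\Omega|>0$) are fine. Your route is more elementary and self-contained — it does not need Lemma \ref{lemma:dissipative-lower-bound} at all — whereas the paper's route reuses a lemma that is required anyway for Theorem \ref{thm:EEP-inequality} and leans less heavily on the pointwise validity of the multiplier rule.
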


	The main result of this note is the functional EEP inequality \eqref{eq:EEP-inequality}. We actually prove that the quadratic relative entropy $\int_\Omega \big( (n-n_\infty)^2 + (p-p_\infty)^2 + (u-u_\infty)^2 \big) \, \dd x$ serves as an upper bound for the relative entropy $\mathcal H$ and as a lower bound for the entropy production $\mathcal P$. 
	
	\begin{theorem}[Entropy--entropy production inequality]
		\label{thm:EEP-inequality}
		Assume that the hypotheses \ref{eq:hypo-w}--\ref{eq:hypo-sigma-w} and \ref{eq:reaction-rate} hold true. Then, there exist positive constants $C_1, C_2 > 0$ satisfying 
		\begin{align}
			\label{eq:EEP-inequality}
			\mathcal H \big( n, p, u \, | \, n_\infty, p_\infty, u_\infty \big) \leq C_1 C_2 \mathcal P(n, p, u) 
		\end{align}
		for all non-negative states $(n, p, u) \in \mathrm L^2_+(\Omega)^3$ subject to $\int_\Omega (n-p) \, \dd x = 0$, \ref{eq:lower-bound-theta}, and \ref{eq:upper-bound-u} as well as $\nabla \sqrt{c}$, $\sqrt{c} \nabla u$, $\frac{\sqrt{c}}{\theta} \nabla \psi$, $\frac{\sqrt{c}}{\theta} \nabla \theta \in \mathrm L^2(\Omega)^d$ for $c = n, p$. Admissible choices for $C_1$ and $C_2$ are given in \eqref{eq:def-c1} and \eqref{eq:def-c2}, respectively. 
	\end{theorem}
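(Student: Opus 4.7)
The plan is to bridge $\mathcal H$ and $\mathcal P$ via the quadratic relative entropy
$$\mathcal Q(n,p,u)\coleq \int_\Omega\Big((n-n_\infty)^2+(p-p_\infty)^2+(u-u_\infty)^2\Big)\,\dd x,$$
exploiting the very clean equilibrium structure from Proposition \ref{prop:equilibrium-special} (namely $n_\infty=p_\infty=w(u_\infty)$, $\psi_\infty\equiv 0$, with all quantities constant in $\Omega$). The overall goal is to establish the two inequalities $\mathcal H\le C_1\,\mathcal Q$ and $\mathcal Q\le C_2\,\mathcal P$ separately, which by concatenation yield \eqref{eq:EEP-inequality}. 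This matches the outline advertised in the introduction via Propositions \ref{prop:relative-entropy-upper-bound} and \ref{prop:entropy-production-lower-bound}.

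\textbf{Upper bound $\mathcal H\le C_1\,\mathcal Q$.} The first three terms in \eqref{eq:def-relative-entropy} form, by strict concavity of $S$, a non-negative Bregman-type divergence which is given by a Taylor remainder with Hessian $-\mathrm D^2S$ evaluated on a segment between $(n,p,u)$ and $(n_\infty,p_\infty,u_\infty)$. Using the explicit entries of $\mathrm D^2S$ (with $-1/c_i$ on the diagonal for concentrations, the hypothesis \ref{eq:hypo-sigma} on $\hat\sigma''$, and hypotheses \ref{eq:hypo-w}--\ref{eq:hypo-top-w} on $w$), together with the pointwise bounds \ref{eq:lower-bound-theta}, \ref{eq:upper-bound-u}, and the resulting uniform upper bounds on $n$ and $p$ explained in Remark \ref{rem:reaction}, one obtains a pointwise bound of the integrand by a constant multiple of $(n-n_\infty)^2+(p-p_\infty)^2+(u-u_\infty)^2$. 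The Poisson term in \eqref{eq:def-relative-entropy} is controlled by testing \eqref{eq:poisson} with $\psi-\psi_\infty=\psi$ and using elliptic regularity / Poincaré to bound $\int\varepsilon|\nabla\psi|^2$ by $\|n-p\|_{\mathrm L^2}^2\le 2\|n-n_\infty\|_{\mathrm L^2}^2+2\|p-p_\infty\|_{\mathrm L^2}^2$. Summing yields the explicit constant $C_1$ in \eqref{eq:def-c1}.

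\textbf{Lower bound $\mathcal Q\le C_2\,\mathcal P$.} This is the real work. I split each of $n-n_\infty$, $p-p_\infty$, $u-u_\infty$ into a mean-zero fluctuation and a shift of the spatial average, $f-f_\infty=(f-\bar f)+(\bar f-f_\infty)$. For the fluctuation parts, I would use the Poincaré inequality and show that each of the relevant ``potentials'' whose gradient appears in \eqref{eq:entropy-production} can be traded for $\nabla n$, $\nabla p$, $\nabla u$ up to gradient terms already controlled by $\mathcal P$; in particular, \eqref{eq:bound-nabla-1-theta} controls $\|\nabla\theta^{-1}\|_{\mathrm L^2}^2$ up to a weight, from which $\|\nabla u\|_{\mathrm L^2}^2$ is recovered via $\theta^{-1}=\hat\sigma'(u)+(n+p)w'(u)/w(u)$ and \ref{eq:hypo-sigma}--\ref{eq:hypo-w_i}, and the second flux term of $\mathcal P$ combined with this bound controls $\|\nabla n\|_{\mathrm L^2}^2$ and $\|\nabla p\|_{\mathrm L^2}^2$ plus electrostatic cross terms of the form $\frac{1}{\theta}\nabla\psi$, which are themselves bounded via \eqref{eq:poisson-np}. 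The averaged parts $\bar n-n_\infty$, $\bar p-p_\infty$, $\bar u-u_\infty$ require the three conservation laws: the charge constraint $\int_\Omega(n-p)\,\dd x=0$ gives $\bar n=\bar p$; the total energy $\mathcal E$ is constrained (via \eqref{eq:def-energy-charge} the electrostatic part is controlled by $\|\nabla\psi\|_{\mathrm L^2}^2$ and thus already in $\mathcal P$); and the reaction integral $\int_\Omega \mathrm D_{\boldsymbol c}S\cdot \boldsymbol R\,\dd x=\int_\Omega\log(w(u)^2/(np))\,F\,(w(u)^2-np)\,\dd x\ge c_F\int_\Omega(w(u)-\sqrt{np})^2\,\dd x$ (using $(a-b)\log(a/b)\ge (\sqrt a-\sqrt b)^2$) forces $\overline{w(u)}\approx\overline{\sqrt{np}}$, locking the energy level to the concentration level.

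\textbf{Main obstacle.} The critical estimate is the finite-dimensional linear-algebra lemma extracting $(\bar n-n_\infty)^2+(\bar p-p_\infty)^2+(\bar u-u_\infty)^2$ from the combination of the three scalar constraints $\bar n-\bar p=0$, $\overline{w(u)}^2-\overline{np}\approx 0$, and the energy constraint $\overline{u}+\tfrac{\varepsilon}{2|\Omega|}\|\nabla\psi\|_{\mathrm L^2}^2=u_\infty$, after linearization around the equilibrium with $n_\infty=p_\infty=w(u_\infty)$. One must verify that the corresponding Jacobian is invertible, which uses the strict monotonicity of $w$ from \ref{eq:hypo-w_i}. Hypothesis \ref{eq:hypo-sigma-w} and the lower bound $c_u$ from \eqref{eq:lower-bound-u} then give uniform control of the linearization constants, and Poincaré (with domain-dependent constant) handles the fluctuations. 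Collecting all estimates yields $C_2$ as in \eqref{eq:def-c2}, and \eqref{eq:EEP-inequality} follows from $\mathcal H\le C_1\mathcal Q\le C_1C_2\mathcal P$.
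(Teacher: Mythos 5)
Your overall architecture is the paper's: sandwich the quadratic relative entropy $\int_\Omega\big((n-n_\infty)^2+(p-p_\infty)^2+(u-u_\infty)^2\big)\,\dd x$ between $\mathcal H$ and $\mathcal P$, exactly as in Propositions \ref{prop:relative-entropy-upper-bound} and \ref{prop:entropy-production-lower-bound}. The upper bound $\mathcal H\le C_1\mathcal Q$ is essentially fine (the paper uses the elementary inequality $z\log z-z+1\le(z-1)^2$ and the concavity constants $K_\sigma$, $K_w$ rather than a Hessian remainder, and notably does not need the $\mathrm L^\infty$ bounds on $n,p$ there, but your route closes). The genuine gap is in the lower bound, specifically in your claim that the entropy production ``trades'' its gradient terms for $\|\nabla n\|^2$, $\|\nabla p\|^2$, $\|\nabla u\|^2$ with the electrostatic cross terms ``themselves bounded via \eqref{eq:poisson-np}.'' The functional $\mathcal P$ in \eqref{eq:entropy-production} only controls the combinations $\nabla c - c\tfrac{w'}{w}\nabla u + \tfrac{q c}{\theta}\nabla\psi$; expanding the squares produces the sign-indefinite cross term $-2\tfrac{w'}{w}(\nabla n+\nabla p)\cdot\nabla u$ and negative $|\nabla\psi|^2$ contributions. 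Absorbing the former is exactly what forces the structural hypotheses \ref{eq:hypo-w}--\ref{eq:hypo-sigma-w} with the restriction $g_w<1/2$ (see \eqref{eq:bounds-w} and \eqref{eq:nabla-n-nabla-p-nabla-u}); absorbing the latter by ``elliptic theory for \eqref{eq:poisson-np}'' is circular, since $\|\nabla\psi\|_{\mathrm L^2}^2\lesssim\|n-p\|_{\mathrm L^2}^2$ is, up to a constant, part of the quantity you are trying to dominate by $\mathcal P$. The paper escapes this circle by the integration-by-parts identity \eqref{eq:nabla-n-p-nabla-psi-estimate}, which converts $-\int\theta^{-1}\nabla(n-p)\cdot\varepsilon\nabla\psi$ into the \emph{good} term $+\int\theta^{-1}(n-p)^2$ plus a $\nabla\theta^{-1}\cdot\nabla\psi$ remainder controlled by \eqref{eq:bound-nabla-1-theta}; this requires the $\varepsilon/\overline\varepsilon$ weighting and is the content of Lemma \ref{lemma:dissipative-lower-bound}, which your sketch has no substitute for. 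Relatedly, your recovery of $\|\nabla u\|^2$ from \eqref{eq:bound-nabla-1-theta} is also circular: $\nabla\theta^{-1}$ mixes $\nabla u$ with $\tfrac{w'}{w}(\nabla n+\nabla p)$, so it cannot yield $\|\nabla u\|^2$ before $\nabla n,\nabla p$ are controlled.

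For the ``averaged'' part your strategy (conservation laws plus a finite-dimensional invertibility lemma in the spirit of Desvillettes--Fellner) is a legitimate alternative to the paper's more direct route, which combines the energy conservation law \eqref{eq:energy-conservation-eep-inequality} with Poincar\'e on $u$ (paying a $(n-p)^4$ term absorbed by $\theta^{-1}(n-p)^2$ via $\theta^{-1}\gtrsim n+p$) and the Gajewski--G\"artner inequality $(x-y)\log(x/y)\ge4(\sqrt x-\sqrt y)^2\ge4(n-w(u))(p-w(u))$ to kill the bilinear term. If you pursue your route you would still need to justify uniform (not merely pointwise-at-equilibrium) invertibility of the linearized constraint map and to handle $\overline{\sqrt{np}}$ versus $\sqrt{\bar n\bar p}$ near the vacuum set, which is why one normally works in the variables $\sqrt n,\sqrt p$. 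As it stands, the proposal identifies the right skeleton but omits the step that constitutes the actual difficulty of the electro-coupled case.
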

	
	\begin{proof}
		The proof is an immediate consequence of Propositions \ref{prop:relative-entropy-upper-bound} and \ref{prop:entropy-production-lower-bound}. 
	\end{proof}
	
	We subsequently introduce a notion of a weak solution to the EERDS \eqref{eq:system-np}--\eqref{eq:flux-np}, which encompasses some minimum requirements on the densities $n$ and $p$ and on the internal energy $u$ to allow for the weak formulation in \eqref{eq:weak-formulation}. Definition \ref{def:weak-solution} only serves as a framework for the exponential equilibration result in Corollary \ref{cor:exp-conv} below. Proving the existence of any kind of global solutions will be the subject of future studies. 

	\begin{definition}[Global weak solutions]
		 \label{def:weak-solution}
		 A global weak solution to \eqref{eq:system-np}--\eqref{eq:flux-np} with initial data $(n_0, p_0, u_0) \in \mathrm L^2_+(\Omega)^3$ satisfying $\int_\Omega (n_0-p_0) \, \dd x = 0$ is a triple of non-negative functions $(n, p, u) \in \mathrm L^1_\mathrm{loc}(0, \infty; \mathrm L^2_+(\Omega))^3$ such~that 
		 \begin{itemize}
		 	\item $\dot n, \, \dot p, \, \dot u \in \mathrm L^1_\mathrm{loc} \big(0, \infty; \mathrm W^{1,\infty}(\Omega)^* \big)$, 
		 	\item $j_n, \, j_p, \, j_u \in \mathrm L^1_\mathrm{loc} \big(0, \infty; \mathrm L^1(\Omega)^d \big)$, 
		 	\item $R, \, (j_n - j_p) \! \cdot \! \nabla \psi \in \mathrm L^1_\mathrm{loc} \big(0, \infty; \mathrm L^1(\Omega) \big)$, 
		\end{itemize}
		where $j_n$, $j_p$, and $j_u$ are defined in \eqref{eq:flux-np}, $\psi(t) \in \mathrm H^1(\Omega)$ is the weak solution to \eqref{eq:poisson-np} for a.e.\ $t \geq 0$, the initial data $(n, p, u)(0) = (n_0, p_0, u_0)$ are attained in $\big( \mathrm W^{1,\infty}(\Omega)^* \big)^3$, and where 
		\begin{align}
			&\int_0^T \Big( \big\langle \dot n, \phi_n \big\rangle + \big\langle \dot p, \phi_p \big\rangle + \big\langle \dot u, \phi_u \big\rangle \Big) \, \dd t \nonumber \\ 
			&\qquad = \int_0^T \int_\Omega \Big( j_n \cdot \nabla \phi_n + j_p \cdot \nabla \phi_p + j_u \cdot \nabla \phi_u \Big) \, \dd x \, \dd t \nonumber \\ 
			&\qquad\qquad + \int_0^T \int_\Omega \Big( R(n, p, u) \big( \phi_n + \phi_p \big) + (j_n - j_p) \cdot \nabla \psi \, \phi_u \Big) \, \dd x \, \dd t \label{eq:weak-formulation}
		\end{align}
		holds for all $(\phi_n, \phi_p, \phi_u) \in \mathrm L^\infty \big( 0, T; \mathrm W^{1,\infty}(\Omega) \big)^3$ and all $T > 0$. 
	\end{definition}
	
	Applying the EEP estimate from Theorem \ref{thm:EEP-inequality} to a global weak solution $(n, p, u)$ as in Definition \ref{def:weak-solution} entails an exponential decay of the relative entropy as in \eqref{eq:exp-decay-general} by using a Gronwall argument. A subsequent application of the Csisz\'ar--Kullback--Pinsker inequality leads to the desired exponential convergence of $(n, p, u, \psi)$ to $(n_\infty, p_\infty, u_\infty, 0)$ as announced in \eqref{eq:exp-conv-general}. 

	\begin{corollary}[Exponential convergence to the equilibrium]
		\label{cor:exp-conv}
		Suppose that \ref{eq:hypo-w}--\ref{eq:hypo-sigma-w} and \ref{eq:reaction-rate} are valid. Moreover, let $(n, p, u)$ be a global weak solution to the EERDS \eqref{eq:system-np}--\eqref{eq:flux-np} with initial data $(n_0, p_0, u_0) \in \mathrm L^2_+(\Omega)^3$ subject to $\int_\Omega (n_0-p_0) \, \dd x = 0$ according to Definition \ref{def:weak-solution} satisfying \ref{eq:lower-bound-theta} and \ref{eq:upper-bound-u} uniformly for all $t \geq 0$ as well as $\nabla \sqrt{c} \big|_t$, $\sqrt{c} \nabla u \big|_t$, $\frac{\sqrt{c}}{\theta} \nabla \psi \big|_t$, $\frac{\sqrt{c}}{\theta} \nabla \theta \big|_t \in \mathrm L^2(\Omega)^d$ for $c = n, p$ and all $t \geq 0$. Then, there exists a positive constant $C_3 > 0$ such that 
		\begin{align*}
			\max \Big\{ \| n(t) - n_\infty \|_{\mathrm L^1}^2, \, \| p(t) - p_\infty \|_{\mathrm L^1}^2, \, \| u(t) - u_\infty \|_{\mathrm L^1}^2, \, \| \psi(t) \|_{\mathrm H^1}^2 \Big\} \leq C_3 \ee^{-\frac{t}{C_1 C_2}} 
		\end{align*}
		holds for all $t \geq 0$ with $C_1$ and $C_2$ as in Theorem \ref{thm:EEP-inequality}. A feasible choice is 
		\begin{align*}
			C_3 \coleq \max\Bigg\{ 2 |\Omega| \bigg( \frac{2 w(C_u)}{3 c_\theta w'(C_u)} + \frac{4 w(C_u)}{3} + \frac{w'(0)^2}{2 k_w(C_u)} \bigg), \ \frac{|\Omega|}{k_\sigma(C_u)}, \ \frac{2 (1 + C_\mathrm{P}) \theta_\infty}{\underline \varepsilon} \Bigg\} \mathcal H_0
		\end{align*}
		with $\mathcal H_0 \coleq \mathcal H \big( n_0, p_0, u_0 \, | \, n_\infty, p_\infty, u_\infty \big)$ and $k_w(C_u), \, k_\sigma(C_u) > 0$ defined in \eqref{eq:def-k-w} and \eqref{eq:def-k-sigma}. 
	\end{corollary}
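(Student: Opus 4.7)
The plan is to use the functional EEP inequality from Theorem \ref{thm:EEP-inequality} to drive an exponential Gronwall decay of the relative entropy $\mathcal{H}(t) \coleq \mathcal{H}(n(t), p(t), u(t) \,|\, n_\infty, p_\infty, u_\infty)$ along the given global weak solution, and then to convert this entropic decay into the asserted $\mathrm{L}^1$ and $\mathrm{H}^1$ bounds via Csisz\'ar--Kullback--Pinsker (CKP) type estimates and Poincar\'e's inequality. First, I would justify the identity $\frac{\dd}{\dd t}\mathcal{H}(t) = -\mathcal{P}(n(t),p(t),u(t))$ for a.e.\ $t \geq 0$. Formally this follows by combining \eqref{eq:h-dissip-s-dissip} with \eqref{eq:entropy-production-law}; the pairings involving $\mathrm{D}\mathcal{S}(\boldsymbol Z_\infty)$ (a constant vector) and the electrostatic contribution are rigorous because $\psi(t) \in \mathrm W^{1,\infty}(\Omega)$ under \ref{eq:lower-bound-theta}--\ref{eq:upper-bound-u} by the discussion in Remark \ref{rem:duality} and Remark \ref{rem:reaction}, while the only delicate term $\langle \dot{\boldsymbol Z}, \mathrm D\mathcal{S}(\boldsymbol Z)\rangle$ is either finite (and then produces $-\mathcal{P}$) or $+\infty$ by Remark \ref{rem:integrability}; either way, $\frac{\dd}{\dd t}\mathcal{H}(t) \leq -(C_1 C_2)^{-1}\mathcal{H}(t)$ holds, and Gronwall's lemma yields $\mathcal{H}(t) \leq \mathcal{H}_0 \, \ee^{-t/(C_1 C_2)}$ for all $t \geq 0$.

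Next, I would lower-bound $\mathcal{H}(t)$ by each of the four quantities on the left-hand side of the claim. The electrostatic term is immediate: since $\int_\Omega \psi \, \dd x = 0$, Poincar\'e's inequality combined with $\varepsilon \geq \underline\varepsilon$ yields
\begin{align*}
	\|\psi(t)\|_{\mathrm H^1}^2 \leq (1 + C_\mathrm{P}) \|\nabla \psi(t)\|_{\mathrm L^2}^2 \leq \frac{1 + C_\mathrm{P}}{\underline\varepsilon} \int_\Omega \varepsilon |\nabla \psi(t)|^2 \, \dd x \leq \frac{2(1 + C_\mathrm{P})\theta_\infty}{\underline\varepsilon}\mathcal{H}(t),
\end{align*}
which accounts for the third entry of $C_3$. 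For $n$, $p$ and $u$, I would use the entropy structure \eqref{eq:def-entropy} and the equilibrium identity $n_\infty = p_\infty = w(u_\infty)$ from Proposition \ref{prop:equilibrium-special} to rewrite the pointwise relative entropy density as a Boltzmann part comparable to $n_\infty \lambda(n/n_\infty) + p_\infty \lambda(p/p_\infty)$ and a thermal part of order $(u-u_\infty)^2$ that is coercive thanks to the strict concavity of $\hat\sigma$ on $[c_u, C_u]$ (captured by a constant $k_\sigma(C_u) > 0$ via \ref{eq:hypo-sigma}), with corrections controlled through the concavity of $\log w$ (via \ref{eq:hypo-w_i}, producing the constant $k_w(C_u)$). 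A standard CKP inequality of the form $\int_\Omega \lambda(f/g) g \, \dd x \geq \frac{1}{2(\|f\|_{\mathrm L^1} + \|g\|_{\mathrm L^1})}\|f - g\|_{\mathrm L^1}^2$ applied with $g = n_\infty$ and $g = p_\infty$ then produces the $\mathrm L^1$ control of $n - n_\infty$ and $p - p_\infty$, while Jensen's inequality converts the thermal $\mathrm L^2$ bound on $u - u_\infty$ into an $\mathrm L^1$ bound. Tracking the uniform upper bounds $n, p \leq w(C_u)/(c_\theta w'(C_u))$ from Remark \ref{rem:reaction} and the coercivity constants produces precisely the first two entries of $C_3$.

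The main obstacle is the nontrivial coupling of $\boldsymbol c$ and $u$ through $w(u)$ in $S$, which prevents a direct decoupling of the relative entropy density into a purely concentration part and a purely energy part. I would resolve this by adding and subtracting $(n+p)\log w(u_\infty)$ to isolate a clean Boltzmann piece in $n/n_\infty$ and $p/p_\infty$, and then by exploiting the concavity of $u \mapsto \log w(u)$ to absorb the residual cross term $-(n+p)\bigl[\log w(u) - \log w(u_\infty) - \tfrac{w'(u_\infty)}{w(u_\infty)}(u - u_\infty)\bigr]$ into the coercive thermal contribution via \ref{eq:hypo-w_i} and the upper bounds on $n, p$; this is where the factor $\tfrac{w'(0)^2}{2 k_w(C_u)}$ in $C_3$ enters. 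Combining the three resulting $\mathrm L^1$ bounds with the $\mathrm H^1$ bound on $\psi$ and the exponential Gronwall decay of $\mathcal{H}(t)$ yields the statement with the explicit constant $C_3$.
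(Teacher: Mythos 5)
Your overall architecture (Gronwall via the EEP inequality, then a Csisz\'ar--Kullback--Pinsker bound for $n,p$, quadratic coercivity of $\hat\sigma$ for $u$, and Poincar\'e for $\psi$) coincides with the paper's, and the Gronwall step, the $u$-estimate via $k_\sigma$, and the $\psi$-estimate are carried out correctly. The gap lies in your treatment of $n$ and $p$. You propose to apply the CKP inequality relative to the \emph{constant} equilibrium $g=n_\infty=w(u_\infty)$, i.e.\ to extract the Boltzmann piece $n\log(n/n_\infty)-n+n_\infty$. Carrying out this regrouping of \eqref{eq:def-relative-entropy} (using $\mathrm D_{n}S(\boldsymbol Z_\infty)=\mathrm D_{p}S(\boldsymbol Z_\infty)=0$ and $\mathrm D_u S(\boldsymbol Z_\infty)=\hat\sigma'(u_\infty)+(n_\infty+p_\infty)w'(u_\infty)/w(u_\infty)$) leaves, besides the term you name, the additional bilinear remainder
\begin{align*}
 -\frac{w'(u_\infty)}{w(u_\infty)}\int_\Omega (u-u_\infty)\big[(n-n_\infty)+(p-p_\infty)\big]\,\dd x ,
\end{align*}
because the linear-in-$(u-u_\infty)$ correction in $\mathcal H$ carries the weight $n_\infty+p_\infty$, not $n+p$. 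This remainder has no sign. Moreover, the term you do single out, $-(n+p)\big[\log w(u)-\log w(u_\infty)-\tfrac{w'(u_\infty)}{w(u_\infty)}(u-u_\infty)\big]$, is nonnegative by concavity of $\log w$ and needs no absorbing at all, so your bookkeeping attaches the difficulty to the wrong term. Absorbing the genuine cross term by Young's inequality into $\lambda_{n_\infty}(n)$ and the $k_\sigma$-coercive $\hat\sigma$-remainder is delicate: the quadratic lower bound on the Boltzmann piece degenerates as $n\to 0$, and there is no a priori guarantee from the hypotheses that the resulting Young constant times $(w'(u_\infty)/w(u_\infty))^2$ stays below $k_\sigma$.

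The paper sidesteps this entirely: in \eqref{eq:repres-rel-entropy} it keeps the Boltzmann part in the form $n\log(n/w(u))-n+w(u)$, i.e.\ it measures $n$ against the \emph{pointwise} energetic equilibrium $w(u(x))$, applies the CKP inequality with $f=n$, $g=w(u)$ (using the uniform bounds $n,p\le w(C_u)/(c_\theta w'(C_u))$ from Remark \ref{rem:reaction}), and recovers $\|n-n_\infty\|_{\mathrm L^1}$ by the triangle inequality through $\|w(u)-w(u_\infty)\|_{\mathrm L^1}$, the latter controlled by the $k_w$-coercivity \eqref{eq:def-k-w} together with $(w(u)-w(u_\infty))^2\le w'(0)^2(u-u_\infty)^2$. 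In that grouping every remainder is manifestly nonnegative, and the stated constant $C_3$ --- in particular the factors $\tfrac{2w(C_u)}{3c_\theta w'(C_u)}+\tfrac{4w(C_u)}{3}$ and $\tfrac{w'(0)^2}{2k_w(C_u)}$ --- is exactly what this split produces; your route, even if repaired, would not yield the quoted feasible choice of $C_3$.
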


	\section{Proofs of the main results}
	\label{sec:proof}
	
	This section is devoted to the proofs of the EEP inequality in Theorem \ref{thm:EEP-inequality} and the exponential equilibration of solutions to the EERDS \eqref{eq:system-np}--\eqref{eq:flux-np} in Corollary \ref{cor:exp-conv}. Before that, we prove Proposition \ref{prop:equilibrium-special} on the special structure of the equilibrium.

	\subsection{Proof of Proposition \ref{prop:equilibrium-special}}
	\label{subsec:proof-equil}
	
	The following result gives a lower bound for $\mathcal P(n, p, u)$, which allows for a more explicit control on the gradients $\nabla u$ and $\nabla \psi$. 
	\pagebreak
	\begin{lemma}
		\label{lemma:dissipative-lower-bound}
		Let the hypotheses \ref{eq:hypo-w}--\ref{eq:hypo-sigma-w} be in place. Then, we have 
		\begin{align}
			&\bigg( 1 + 2 \max \big\{ G_\sigma(c_u), \, G_w \big\}^2 \Big( g_w + \frac12 \Big) \bigg) \mathcal P(n, p, u) \geq \int_\Omega \big( \DD_n S + \DD_p S \big) R \; \dd x \nonumber \\ 
			&\qquad + \Big( \frac12 - g_w \Big) \int_\Omega \frac{\varepsilon}{\overline \varepsilon} \bigg( \frac{1}{n} \Big| \nabla n - \frac{n}{\theta} \nabla \psi \Big|^2 + \frac{1}{p} \Big| \nabla p + \frac{p}{\theta} \nabla \psi \Big|^2 \bigg) \, \dd x \nonumber \\ 
			&\qquad + \Big( \frac12 - g_w \Big) \int_\Omega \frac{\varepsilon}{\overline \varepsilon} \bigg( {-} \hat \sigma''(u) + \bigg( \Big( \frac{w'(u)}{w(u)} \Big)^2 - \frac{w''(u)}{w(u)} \bigg) (n + p) \bigg) |\nabla u|^2 \, \dd x \nonumber \\ 
			&\qquad + \Big( \frac12 - g_w \Big) \int_\Omega \frac{\varepsilon}{\overline \varepsilon} \frac{1}{\theta^2} \bigg( {-} \hat \sigma''(u) - \frac{w''(u)}{w(u)} (n + p) \bigg)^{-1} \Big( \frac{w'(u)}{w(u)} \Big)^2 (n - p)^2 \, |\nabla \psi|^2 \, \dd x \label{eq:dissipative-lower-bound}
		\end{align}
		for all non-negative states $(n, p, u) \in \mathrm L^2_+(\Omega)^3$ subject to $\int_\Omega (n-p) \, \dd x = 0$, \ref{eq:lower-bound-theta}, and $\nabla \sqrt{c}$, $\sqrt{c} \nabla u$, $\frac{\sqrt{c}}{\theta} \nabla \psi$, $\frac{\sqrt{c}}{\theta} \nabla \theta \in \mathrm L^2(\Omega)^d$ for $c = n, p$ with the positive constant $G_\sigma(c_u) > 0$ defined in \eqref{eq:condition-eep-inequality-final}. 
	\end{lemma}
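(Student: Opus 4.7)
The plan is to decompose $\mathcal P(n,p,u)$ as the sum of the reaction integrand $(\DD_n S + \DD_p S)R$ and three squared flux contributions $T_1, T_2, T_3$ corresponding to the electron flux, hole flux, and heat flux. Since the reaction integrand appears on both sides of the claimed inequality with multiplier $K \coleq 1 + 2\max\{G_\sigma(c_u), G_w\}^2(g_w + 1/2)$ on the left and $1$ on the right, the surplus $(K-1)\int_\Omega (\DD_n S + \DD_p S)R\,\dd x \geq 0$ may be discarded thanks to the entropy inequality in \ref{eq:hypo-m}. The real task is therefore to bound $K(T_1 + T_2 + T_3)$ pointwise from below by $(1/2 - g_w)\tfrac{\varepsilon}{\overline\varepsilon}$ times the separated quadratic form in $\tfrac{1}{n}|\nabla n - \tfrac{n}{\theta}\nabla\psi|^2$, $\tfrac{1}{p}|\nabla p + \tfrac{p}{\theta}\nabla\psi|^2$, $|\nabla u|^2$, and $|\nabla\psi|^2$ appearing on the right.

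The central algebraic step is to expand the three squares and observe the cancellation
$X + Y - \tfrac{p-n}{\theta}\nabla\psi = \nabla(n+p)$, where $X \coleq \nabla n - \tfrac{n}{\theta}\nabla\psi$ and $Y \coleq \nabla p + \tfrac{p}{\theta}\nabla\psi$, which collapses the cross terms into the single expression
\begin{align*}
T_1 + T_2 + T_3 = \tfrac{|X|^2}{n} + \tfrac{|Y|^2}{p} + C_u|\nabla u|^2 + C_\psi|\nabla\psi|^2 - 2\tfrac{w'(u)}{w(u)}\nabla u \cdot \nabla(n+p),
\end{align*}
with $C_u = -\hat\sigma''(u) + ((w'/w)^2 - w''/w)(n+p)$ and $C_\psi$ as in the last line of \eqref{eq:dissipative-lower-bound}. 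This cross term can be further illuminated by the chain-rule identity $\nabla(1/\theta) = -C_u\nabla u + \tfrac{w'(u)}{w(u)}\nabla(n+p)$, derived from differentiating $\theta^{-1} = \hat\sigma'(u) + (n+p)\tfrac{w'(u)}{w(u)}$, and which will let me trade gradients of $n+p$ for gradients of $1/\theta$ later if needed.

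I would then apply weighted Young's inequalities to the cross term $2(w'/w)\nabla u \cdot \nabla(n+p)$, splitting $\nabla(n+p) = X + Y + \tfrac{n-p}{\theta}\nabla\psi$ and bounding the three resulting couplings separately. Hypothesis \ref{eq:hypo-w} enters through $(n+p)(w'/w)^2 \leq g_w\gamma \leq g_w C_u$, which shows that absorbing the $\nabla u \cdot X$ and $\nabla u \cdot Y$ couplings into $\tfrac{|X|^2}{n}, \tfrac{|Y|^2}{p}$ and $C_u|\nabla u|^2$ costs only a factor controlled by $g_w$, leaving the slack $(1/2 - g_w)$ on the right. The combination of \ref{eq:hypo-top-w} and \ref{eq:hypo-sigma-w} yields the auxiliary bound $\gamma\theta \leq G\tfrac{w'(u)}{w(u)}$ with $G \coleq \max\{G_\sigma(c_u), G_w\}$, which is exactly what is needed to bound the $\nabla u \cdot \tfrac{n-p}{\theta}\nabla\psi$ cross term in terms of $C_u|\nabla u|^2$ and $C_\psi|\nabla\psi|^2$ at the expense of the multiplicative factor $2G^2(g_w + 1/2)$; this is the source of the second summand in the constant $K$.

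The main obstacle will be the precise balancing of the Young parameters so that all four pieces of the quadratic form on the right are recovered simultaneously with the \emph{same} prefactor $(1/2 - g_w)$, while every error term fits into the allotted budget $K$. The difficulty concentrates on the cross term $2\tfrac{(p-n)(w'/w)}{\theta}\nabla u \cdot \nabla\psi$: since $C_\psi$ carries the weight $(n-p)^2(w'/w)^2/(\gamma\theta^2)$ precisely matching the factor appearing in this cross term, the Young parameter has to be tuned to its critical value, and it is here that the quadratic factor $\max\{G_\sigma(c_u), G_w\}^2$ in $K$ arises. The weakening factor $\varepsilon/\overline\varepsilon \leq 1$ on the right plays no role in the pointwise estimate itself and is included only for later convenience when combining with Poisson's equation.
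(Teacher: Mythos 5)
Your setup is sound through the identity
$\mathcal P = \int_\Omega(\DD_nS+\DD_pS)R\,\dd x + \int_\Omega\big(\tfrac{|X|^2}{n}+\tfrac{|Y|^2}{p}\big)\dd x + \int_\Omega\big(\gamma+(n{+}p)(\tfrac{w'}{w})^2\big)|\nabla u|^2\dd x + \int_\Omega C_\psi|\nabla\psi|^2\dd x - 2\int_\Omega\tfrac{w'}{w}\nabla u\cdot\nabla(n{+}p)\,\dd x$
with $C_\psi\coleq\gamma^{-1}\theta^{-2}(\tfrac{w'}{w})^2(n-p)^2$ (this is \eqref{eq:entropy-production-cross-term}), and the absorption of the $\nabla u\cdot X$ and $\nabla u\cdot Y$ couplings via \ref{eq:hypo-w} is fine. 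The gap is your treatment of the remaining coupling $-2\tfrac{w'}{w}\tfrac{n-p}{\theta}\nabla u\cdot\nabla\psi$. This term is exactly the cross term of the perfect square $T_3=\gamma\big|\nabla u+\gamma^{-1}\tfrac{p-n}{\theta}\tfrac{w'}{w}\nabla\psi\big|^2$, i.e.\ in the worst case it equals $-2\sqrt{\gamma|\nabla u|^2}\sqrt{C_\psi|\nabla\psi|^2}$, so Young's inequality is \emph{saturated}: absorbing it costs at least $\gamma|\nabla u|^2+C_\psi|\nabla\psi|^2$, which already exhausts the entire available budget (note $\gamma+(n{+}p)(w'/w)^2\le(1+g_w)\gamma$ by \ref{eq:hypo-w}), leaving nothing of the separated $|\nabla u|^2$ and $|\nabla\psi|^2$ terms. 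Enlarging the prefactor $K$ on the left does not help: the extra copies $(K-1)T_3$ are copies of the same rank-one degenerate form and cannot separate $|\nabla u|^2$ from $|\nabla\psi|^2$. Concretely, at any point with $n\neq p$ one can choose $\nabla n,\nabla p,\nabla u,\nabla\psi$ so that all three squares in \eqref{eq:entropy-production} vanish while $\nabla u,\nabla\psi\neq0$; there the right-hand side of \eqref{eq:dissipative-lower-bound} is strictly positive and the left-hand side (minus the common reaction term) is zero. The claimed estimate is therefore \emph{false} as a pointwise inequality, and your bound ``$\gamma\theta\le G\,w'/w$'' cannot repair a degenerate quadratic form by a multiplicative constant.

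The paper's proof is forced to be non-local at exactly this point. After a different bookkeeping of the Young inequalities, the irreducible cross term appears as $-\int_\Omega\tfrac{4\rho}{\overline\varepsilon\theta}\nabla(n-p)\cdot\varepsilon\nabla\psi\,\dd x$, which is integrated by parts against Poisson's equation $-\div(\varepsilon\nabla\psi)=-n+p$ in \eqref{eq:nabla-n-p-nabla-psi-estimate} --- this is precisely why the factor $\varepsilon/\overline\varepsilon$ must be inserted \emph{inside} this proof, contrary to your closing remark. This produces the sign-definite term $\int_\Omega(n-p)^2/\theta\,\dd x$ plus the remainder $\int_\Omega(n-p)\nabla\tfrac1\theta\cdot\varepsilon\nabla\psi\,\dd x$, whose $|\nabla\tfrac1\theta|^2$ part is paid for by a \emph{separate} copy of $\mathcal P$ through \eqref{eq:bound-nabla-1-theta}; only there do \ref{eq:hypo-top-w}--\ref{eq:hypo-sigma-w} and the summand $2\max\{G_\sigma(c_u),G_w\}^2(g_w+\tfrac12)$ in the constant enter. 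Your argument needs this (or an equivalent global) mechanism; as written it cannot be completed.
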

	
	\begin{proof}
	We recast the entropy production \eqref{eq:entropy-production} as 
	\begin{align}
		\mathcal P(n, p, u) &= \int_\Omega \big( \DD_n S + \DD_p S \big) R \; \dd x - 2 \int_\Omega \frac{w'(u)}{w(u)} \big( \nabla n + \nabla p \big) \cdot \nabla u \, \dd x \nonumber \\ 
		&\quad + \int_\Omega \frac{1}{n} \Big| \nabla n - \frac{n}{\theta} \nabla \psi \Big|^2 \, \dd x 
		+ \int_\Omega \frac{1}{p} \Big| \nabla p + \frac{p}{\theta} \nabla \psi \Big|^2 \, \dd x \nonumber \\ 
		&\quad + \int_\Omega \bigg( {-} \hat \sigma''(u) + \bigg( \Big( \frac{w'(u)}{w(u)} \Big)^2 - \frac{w''(u)}{w(u)} \bigg) (n + p) \bigg) |\nabla u|^2 \, \dd x \nonumber \\ 
		&\quad + \int_\Omega \frac{1}{\theta^2} \bigg( {-} \hat \sigma''(u) - \frac{w''(u)}{w(u)} (n + p) \bigg)^{-1} \Big( \frac{w'(u)}{w(u)} \Big)^2 (n - p)^2 \, |\nabla \psi|^2 \, \dd x. \label{eq:entropy-production-cross-term}
	\end{align}
	The only term in \eqref{eq:entropy-production-cross-term}, which might be negative, is the second integral in the first line, and we will subsequently show how to get rid of this term. 
	An essential ingredient is our hypothesis $(w')^2 \leq - g_w w'' w$ in \ref{eq:hypo-w}, where we additionally demand $g_w \in (0, 1/2)$. The restriction $g_w < 1/2$ is needed for technical reasons below, which build on the following elementary bounds: 
	\begin{subequations}
		\label{eq:bounds-w}
		\begin{gather}
			\frac{2}{g_w + \frac12} \Big( \frac{w'}{w} \Big)^2 \leq \Big( g_w + \frac12 \Big) \Big( 1 + \frac{1}{g_w} \Big) \Big( \frac{w'}{w} \Big)^2 \stackrel{\ref{eq:hypo-w}}{\leq} \Big( g_w + \frac12 \Big) \bigg( \Big( \frac{w'}{w} \Big)^2 - \frac{w''}{w} \bigg), \label{eq:bounds-w-a} \\ 
			1 + \frac{2}{g_w + \frac12} \Big( \frac{w'}{w} \Big)^2 \frac{w}{w''} \stackrel{\ref{eq:hypo-w}}{\geq} 1 - \frac{2 g_w}{g_w + \frac12} = \frac{\frac12 - g_w}{\frac12 + g_w} \stackrel{g_w \leq \frac12}{\geq} \frac12 - g_w. \label{eq:bounds-w-b}  
		\end{gather}
	\end{subequations}
	With $\rho \coleq (g_w + 1/2)/2 \in (1/4, 1/2)$, we estimate 
	\begin{align}
		-2 \frac{w'}{w} \big( \nabla n + \nabla p \big) \cdot \nabla u \geq \ &{-} \rho \Big( \frac{w}{w''} \hat \sigma'' + n + p \Big)^{-1} | \nabla n + \nabla p |^2 \nonumber \\ 
		&{-} \frac{1}{\rho} \Big( \frac{w'}{w} \Big)^2 \Big( \frac{w}{w''} \hat \sigma'' + n + p \Big) |\nabla u|^2. \label{eq:nabla-n-nabla-p-nabla-u}
	\end{align}
	Thanks to \eqref{eq:bounds-w-a} and \eqref{eq:bounds-w-b}, the second line of \eqref{eq:nabla-n-nabla-p-nabla-u} and the third line of \eqref{eq:entropy-production-cross-term} result in 
	\begin{align*}
		\Big( \frac12 - g_w \Big) \bigg( {-} \hat \sigma''(u) + \bigg( \Big( \frac{w'(u)}{w(u)} \Big)^2 - \frac{w''(u)}{w(u)} \bigg) (n + p) \bigg) |\nabla u|^2. 
	\end{align*}
	The first line of \eqref{eq:nabla-n-nabla-p-nabla-u} is combined with a fraction of the second line of \eqref{eq:entropy-production-cross-term}, which yields 
	\begin{align}
		&\frac{2 \rho}{n} \Big| \nabla n - \frac{n}{\theta} \nabla \psi \Big|^2
		+ \frac{2 \rho}{p} \Big| \nabla p + \frac{p}{\theta} \nabla \psi \Big|^2 - \rho \Big( \frac{w}{w''} \hat \sigma'' + n + p \Big)^{-1} | \nabla n + \nabla p |^2 \nonumber \\ 
		&\quad = \frac{2 \rho}{n} |\nabla n|^2 + \frac{2 \rho}{p} |\nabla p|^2 - \rho \Big( \frac{w}{w''} \hat \sigma'' + n + p \Big)^{-1} | \nabla n + \nabla p |^2 \nonumber \\ 
		&\qquad - \frac{4 \rho}{\theta} \big( \nabla n - \nabla p \big) \cdot \nabla \psi + \frac{2 \rho (n+p)}{\theta^2} |\nabla \psi|^2, \label{eq:nabla-n-nabla-p-remainder}
	\end{align}
	where the second line of \eqref{eq:nabla-n-nabla-p-remainder} is non-negative. The previous arguments towards a lower bound on the entropy production \eqref{eq:entropy-production-cross-term} also hold true if we include the factor $\varepsilon / \overline \varepsilon \in (0, 1)$ within each integral in \eqref{eq:entropy-production-cross-term} (apart from the very first one). This is only a technical step, which is nevertheless essential to deal with the ``cross'' term in the last line of \eqref{eq:nabla-n-nabla-p-remainder}: 
	\begin{align}
		\label{eq:nabla-n-p-nabla-psi-estimate}
		-\int_\Omega \frac{4 \rho}{\overline \varepsilon \theta} \big( \nabla n - \nabla p \big) \cdot \varepsilon \nabla \psi \, \dd x = \int_\Omega \frac{4 \rho}{\overline \varepsilon \theta} (n-p)^2 \, \dd x + \frac{4 \rho}{\overline \varepsilon} \int_\Omega (n-p) \nabla \frac{1}{\theta} \cdot \varepsilon \nabla \psi \, \dd x. 
	\end{align}
	As the first integral on the right-hand side is non-negative, we are left to control the second integrand 
	\begin{align}
		\frac{4 \rho \varepsilon}{\overline \varepsilon} (n-p) \nabla \frac{1}{\theta} \cdot \nabla \psi \geq \ &{-} 2 \rho \theta^2 \Big( {-}\hat \sigma'' - \frac{w''}{w} (n+p) \Big) \Big( \frac{w'}{w} \Big)^{-2} \Big| \nabla \frac{1}{\theta} \Big|^2 \nonumber \\ 
		&{-} \frac{2 \rho}{\theta^2} \frac{\varepsilon}{\overline \varepsilon} \Big( {-}\hat \sigma'' - \frac{w''}{w} (n+p) \Big)^{-1} \Big( \frac{w'}{w} \Big)^2 (n - p)^2 \, |\nabla \psi|^2. \label{eq:nabla-1-theta-nabla-psi-estimate}
	\end{align}
	The integral over the second term is absorbed by the last line in \eqref{eq:entropy-production-cross-term}. 
	Recalling the fundamental formula $\theta^{-1} = \hat \sigma'(u) + (n+p) w'(u)/w(u)$, the integral over the first term is finally bounded by the entropy production $\mathcal P(n, p, u)$ due to \eqref{eq:bound-nabla-1-theta} provided 
	\begin{align}
		\label{eq:condition-eep-inequality-initial}
		\bigg( \frac{w'}{w} \hat \sigma' + \Big( \frac{w'}{w} \Big)^2 (n+p) \bigg)^{-1} \bigg( {-}\hat \sigma'' - \frac{w''}{w} (n+p) \bigg) \leq G(c_u) 
	\end{align}
	with a constant $G(c_u) > 0$ depending on the uniform lower bound on $u$ from \eqref{eq:lower-bound-u} being a consequence of \ref{eq:lower-bound-theta}. To this end, we recall that the equilibrium concentrations satisfy \ref{eq:hypo-top-w}, i.e., $- w'' w \leq G_w (w')^2$ with $G_w > 0$, and \ref{eq:hypo-sigma-w}, i.e., 
	\begin{align}
		\label{eq:condition-eep-inequality-final}
		- \hat \sigma''(u) \leq G_\sigma(c_u) \frac{w'(u)}{w(u)} \hat \sigma'(u) 
	\end{align}
	for all $u \geq c_u$ with a constant $G_\sigma(c_u) > 0$. This shows that \eqref{eq:condition-eep-inequality-initial} holds with the constant $G(c_u) \coleq \max \{ G_\sigma(c_u), G_w \}$. Combining all the estimates above, we arrive at \eqref{eq:dissipative-lower-bound}. 
	\end{proof}
	
	\begin{remark}
		\label{rem:hypo-sigma-w}
		We notice that the powers of $u$ appearing on both sides of \eqref{eq:condition-eep-inequality-final} essentially coincide even though \eqref{eq:condition-eep-inequality-final} does generally not hold for all $u > 0$. In particular, one can derive explicit formulas for $G_\sigma(c_u)$ if $w(u) = b (1+u)^\beta$ with $b > 0$ and $\beta \in (0, 1)$ and if $\hat \sigma(u) = a \log u$ or $\hat \sigma(u) = a u^\alpha$ with $a > 0$ and $\alpha \in (0, 1)$, namely 
		\begin{align*}
		G_\sigma(c_u) = \frac{1 + c_u}{\beta c_u} \qquad \text{or} \qquad G_\sigma(c_u) = \frac{(1-\alpha)(1 + c_u)}{\beta c_u}, 
		\end{align*}
		respectively, using $u \geq c_u$ according to \eqref{eq:lower-bound-u}. 
	\end{remark}
	
	Applying the lower bound \eqref{eq:dissipative-lower-bound} on the entropy production $\mathcal P(n, p, u)$ to the equilibrium state $(n_\infty, p_\infty, u_\infty)$ leads to the special structure of $(n_\infty, p_\infty, u_\infty, \psi_\infty)$ given in Proposition \ref{prop:equilibrium-special}. The reason for this specific situation is the assumed absence of a doping profile $D$ on the right-hand side of Poisson's equation endowed with homogeneous Neumann boundary conditions. Apart from the formal calculation of the entropy production law \eqref{eq:entropy-production-law}, these hypotheses enter the derivation of the lower entropy production bound \eqref{eq:dissipative-lower-bound} from the representation in \eqref{eq:entropy-production-cross-term} only in \eqref{eq:nabla-n-p-nabla-psi-estimate}; the remaining steps are all based on pointwise arguments. 

	\begin{proof}[Proof of Proposition \ref{prop:equilibrium-special}]
		The existence of a unique bounded, continuous, and uniformly positive equilibrium $(n_\infty, p_\infty, u_\infty)$ follows from Proposition \ref{prop:equilibrium}. As $(n_\infty, p_\infty, u_\infty)$ is a constrained maximizer of $\mathcal S(n, p, u)$, we have $\mathcal P(n_\infty, p_\infty, u_\infty) = 0$. From the third line of \eqref{eq:dissipative-lower-bound}, we thus obtain $\nabla u_\infty = 0$ in $\Omega$. We now prove that $n_\infty = p_\infty$ in $\Omega$ by assuming that $\Omega' \coleq \{x \in \Omega \, | \, n_\infty(x) \neq p_\infty(x) \} \neq \emptyset$; note that $\Omega'$ is open by the continuity of $n_\infty$ and $p_\infty$. We infer $\nabla \psi_\infty = 0$ in $\Omega'$ from the fourth line of \eqref{eq:dissipative-lower-bound} and, hence, $\nabla n_\infty = \nabla p_\infty = 0$ in $\Omega'$ from the second line of \eqref{eq:dissipative-lower-bound}. If $\Omega' = \Omega$, then $n_\infty$ and $p_\infty$ are constant in $\Omega$ and the compatibility condition for Poisson's equation \eqref{eq:poisson-equil} entails $0 = \int_\Omega (p_\infty - n_\infty) \, \dd x = |\Omega| (p_\infty - n_\infty)$, which is a contradiction. If $\Omega' \neq \Omega$, then $n_\infty$ and $p_\infty$ are constant in every connected component of $\Omega'$. But as $n_\infty = p_\infty$ holds on the boundary of each connected component inside $\Omega$ and as $n_\infty$ and $p_\infty$ are continuous, we deduce that $n_\infty = p_\infty$ also holds in $\Omega'$, which is again a contradiction. As a result, $\Omega' = \emptyset$, i.e., $n_\infty = p_\infty$ in $\Omega$. From the second line of \eqref{eq:dissipative-lower-bound}, we get $\theta_\infty^{-1} n_\infty \nabla \psi_\infty = \nabla n_\infty = \nabla p_\infty = -\theta_\infty^{-1} p_\infty \nabla \psi_\infty$. Therefore, $\nabla \psi_\infty = 0$ in $\Omega$, and $\int_\Omega \psi_\infty \, \dd x = 0$ entails $\psi_\infty = 0$ in $\Omega$ resulting in $\nabla n_\infty = \nabla p_\infty = 0$ in $\Omega$. Finally, \eqref{eq:optimality-condition} and \eqref{eq:d-s} imply $-\kappa = -\log \big( n_\infty/w(u_\infty) \big) = -\log \big( p_\infty/w(u_\infty) \big) = \kappa$, which allows to conclude that $\kappa = 0$ and $n_\infty = p_\infty = w(u_\infty)$ hold. 
	\end{proof}

	\subsection{Proof of Theorem \ref{thm:EEP-inequality}}
	\label{subsec:proof-thm}
	
	The proof of Theorem \ref{thm:EEP-inequality} is split into two parts. Proposition \ref{prop:relative-entropy-upper-bound} gives an upper bound for the relative entropy in terms of a quadratic relative entropy, while Proposition \ref{prop:entropy-production-lower-bound} shows that this quadratic relative entropy can be controlled in terms of the entropy production. 
	
	\begin{proposition}
		\label{prop:relative-entropy-upper-bound}
		There exists a positive constant $C_1 > 0$ satisfying 
		\begin{align}
		\label{eq:relative-entropy-upper-bound}
		\mathcal H(n, p, u \, | \, n_\infty, p_\infty, u_\infty) \leq C_1 \int_\Omega \Big( (n - n_\infty)^2 + (p - p_\infty)^2 + (u - u_\infty)^2 \Big) \, \dd x 
		\end{align}
		for all non-negative states $(n, p, u) \in \mathrm L^2_+(\Omega)^3$ subject to $\int_\Omega (n-p) \, \dd x = 0$ and \ref{eq:lower-bound-theta}. One can choose 
		\begin{align}
		\label{eq:def-c1}
		C_1 \coleq \max \bigg\{ \frac{2}{w(0)} + \frac{C_\mathrm{P}}{\theta_\infty \underline \varepsilon}, \ 2 \Big( \frac{2 w'(0)^2}{w(0)} + K_w \Big) + K_\sigma(c_u) \bigg\} 
		\end{align}
		with Poincar\'e's constant $C_\mathrm{P} > 0$ used in \eqref{eq:poincare-nabla-psi-n-p} and \eqref{eq:poincare}, and where the positive constants $K_\sigma(c_u)$ and $K_w$ are defined in \eqref{eq:def-K-sigma} and \eqref{eq:def-K-w}, respectively. 
	\end{proposition}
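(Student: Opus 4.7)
The plan is to decompose the relative entropy as $\mathcal H = \mathcal H_S + \mathcal H_\psi$, where
\[
\mathcal H_S \coleq \int_\Omega \bigl[ -S(n,p,u) + S(n_\infty,p_\infty,u_\infty) + \mathrm D S(n_\infty,p_\infty,u_\infty) \cdot \bigl((n,p,u)-(n_\infty,p_\infty,u_\infty)\bigr) \bigr] \, \dd x
\]
is the Bregman divergence of the convex function $-S$, and $\mathcal H_\psi \coleq \frac{1}{2\theta_\infty}\int_\Omega \varepsilon |\nabla \psi|^2 \, \dd x$ using $\psi_\infty = 0$ from Proposition \ref{prop:equilibrium-special}. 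Since $n_\infty = p_\infty = w(u_\infty)$, the identities in \eqref{eq:d-s} give $\mathrm D_n S(\boldsymbol Z_\infty) = \mathrm D_p S(\boldsymbol Z_\infty) = 0$, so only the temperature coordinate contributes to the linear part of $\mathcal H_S$. I would then split $-S(n,p,u) = f(n,u) + f(p,u) - \hat\sigma(u)$ with $f(c,u) \coleq \lambda(c) - c \log w(u)$, and observe that the $2 {\times} 2$ Hessian of $f$ satisfies $\det \mathrm D^2 f = -w''(u)/w(u) > 0$ by \ref{eq:hypo-w}; together with $f_{cc} = 1/c > 0$ this makes $f$ convex, so that $\mathcal H_S$ splits into three individually non-negative Bregman pieces that can be estimated one by one.

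For each concentration--temperature piece, a direct computation exploiting $n_\infty = w(u_\infty)$ yields
\[
B[f(n,u)] = \bigl[n \log(n/w(u)) - n + w(u)\bigr] + \bigl[w(u_\infty) - w(u) + w'(u_\infty)(u-u_\infty)\bigr].
\]
The first bracket is a Boltzmann-type Bregman divergence of $\lambda$ at $n$ relative to $w(u)$, which I would bound via the sharp global estimate $s\log(s/s_\infty) - s + s_\infty \leq (s-s_\infty)^2/s_\infty$ (an immediate consequence of $\log t \leq t-1$), obtaining $(n-w(u))^2/w(0)$ upon using $w(u) \geq w(0)$. Splitting $n - w(u) = (n-n_\infty) - (w(u)-w(u_\infty))$ and applying $|w(u)-w(u_\infty)| \leq w'(0) |u-u_\infty|$ (valid since $w'$ is non-increasing by the strict concavity of $w$ coming from \ref{eq:hypo-w}) then yields a clean quadratic bound in $(n-n_\infty)^2$ and $(u-u_\infty)^2$. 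The second bracket is the non-negative Bregman divergence of the concave function $w$, controlled by Taylor's theorem together with $|w''|$ bounded via \ref{eq:hypo-top-w} (giving $|w''| \leq G_w w'(0)^2/w(0)$), leading to a contribution of the form $K_w (u-u_\infty)^2$. The $p$-piece is handled identically.

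For $B[-\hat\sigma(u)]$, I would use Taylor's theorem with integral remainder and the uniform lower bound $u \geq c_u$ from \eqref{eq:lower-bound-u} (itself a consequence of \ref{eq:lower-bound-theta}) to arrive at an estimate of the form $K_\sigma(c_u)(u-u_\infty)^2$, with $K_\sigma(c_u)$ controlling $|\hat\sigma''|$ on $[c_u, \infty)$. For the electrostatic part, testing Poisson's equation \eqref{eq:poisson-np} with $\psi$ and using $p - n = (p-p_\infty) - (n-n_\infty)$ thanks to $n_\infty = p_\infty$, followed by Cauchy--Schwarz and Poincaré's inequality for the mean-zero $\psi$, gives $\mathcal H_\psi \leq \frac{C_\mathrm P}{\theta_\infty \underline \varepsilon}\bigl(\|n-n_\infty\|_{\mathrm L^2}^2 + \|p-p_\infty\|_{\mathrm L^2}^2\bigr)$. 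Summing the four bounds and taking the maximum of the coefficients in front of the $(n-n_\infty)^2 + (p-p_\infty)^2$ contribution and the $(u-u_\infty)^2$ contribution produces \eqref{eq:relative-entropy-upper-bound} with $C_1$ matching \eqref{eq:def-c1}.

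The main obstacle is securing a genuinely quadratic upper bound for the Boltzmann part in spite of $n \log(n/n_\infty) - n + n_\infty$ growing only like $n \log n$ for large $n$, which a naive Taylor bound involving $1/c$ cannot supply; the sharp elementary inequality $s\log(s/s_\infty) - s + s_\infty \leq (s-s_\infty)^2/s_\infty$ resolves this and gives a global quadratic estimate without requiring any $\mathrm L^\infty$ control on $n$ or $p$. The remaining technical care concerns the coupling between the concentrations and the temperature through $w(u)$, which is handled cleanly by the convex splitting $-S = f(n,u) + f(p,u) - \hat\sigma(u)$ so that each Bregman piece is estimated separately without sign cancellations.
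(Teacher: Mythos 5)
Your proposal is correct and follows essentially the same route as the paper: after using $n_\infty = p_\infty = w(u_\infty)$ and $\psi_\infty = 0$, your Bregman splitting $-S = f(n,u)+f(p,u)-\hat\sigma(u)$ reproduces exactly the paper's representation \eqref{eq:repres-rel-entropy} (Boltzmann terms relative to $w(u)$, twice the Bregman divergence of $w$, the Bregman divergence of $\hat\sigma$, and the Dirichlet term), and the key estimates are identical — the elementary inequality $z\log z - z + 1 \le (z-1)^2$ combined with $w(u)\ge w(0)$ and $|w(u)-w(u_\infty)|\le w'(0)|u-u_\infty|$ for the Boltzmann part, the constants $K_w$, $K_\sigma(c_u)$ for the concave pieces, and testing Poisson's equation with $\psi$ plus Young and Poincar\'e for the electrostatic part. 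The resulting constant matches \eqref{eq:def-c1}, so no substantive difference from the paper's proof.
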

	
	\begin{proof}
		The relative entropy \eqref{eq:def-relative-entropy} (suppressing the dependence on $(n_\infty, p_\infty, u_\infty)$) rewrites as 
		\begin{align}
			\quad&\mquad \mathcal H(n, p, u) = -\int_\Omega \Big( \hat \sigma (u) + n \log w(u) - n \log n + n + p \log w(u) - p \log p + p \Big) \, \dd x \nonumber \\ 
			&+ \int_\Omega \bigg( {-} \log \Big( \frac{n_\infty}{w(u_\infty)} \Big) (n - n_\infty) - \log \Big( \frac{p_\infty}{w(u_\infty)} \Big) (p - p_\infty) \bigg) \, \dd x \nonumber \\ 
			&+ \int_\Omega \Big( \hat \sigma'(u_\infty) + \frac{w'(u_\infty)}{w(u_\infty)} (n_\infty + p_\infty) \Big) (u - u_\infty) \, \dd x + \frac{1}{2 \theta_\infty} \int_\Omega \varepsilon | \nabla \psi |^2 \, \dd x \nonumber \\ 
			&+ \int_\Omega \Big( \hat \sigma(u_\infty) + n_\infty \log w(u_\infty) - n_\infty \log n_\infty + n_\infty + p_\infty \log w(u_\infty) - p_\infty \log p_\infty + p_\infty \Big) \, \dd x \nonumber \\ 
			\quad&\mquad = \int_\Omega \bigg( n \log \Big( \frac{n}{w(u)} \Big) - n + w(u) + p \log \Big( \frac{p}{w(u)} \Big) - p + w(u) \bigg) \, \dd x \nonumber \\ 
			&- 2 \int_\Omega \Big( w(u) - w'(u_\infty) (u - u_\infty) - w(u_\infty) \Big) \, \dd x \nonumber \\ 
			&- \int_\Omega \Big( \hat \sigma(u) - \hat \sigma'(u_\infty)(u - u_\infty) - \hat \sigma (u_\infty) \Big) \, \dd x + \frac{1}{2 \theta_\infty} \int_\Omega \varepsilon | \nabla \psi |^2 \, \dd x. \label{eq:repres-rel-entropy}
		\end{align}
		A bound on $\int_\Omega \varepsilon | \nabla \psi |^2 \, \dd x$ is easily deduced from Poisson's equation \eqref{eq:poisson-np} with homogeneous Neumann boundary conditions. In fact, we derive 
		\begin{align*}
			\int_{\Omega} \varepsilon | \nabla \psi |^2 \, \dd x = -\int_\Omega (n - p) \psi \, \dd x \leq \frac{1}{2\rho} \| n - p \|_{\mathrm L^2}^2 + \frac{\rho}{2} \| \psi \|_{\mathrm L^2}^2 
		\end{align*}
		with some constant $\rho > 0$. As $\int_\Omega \psi \, \dd x = 0$, we can apply Poincar\'e's inequality in the form $\| \psi \|_{\mathrm L^2}^2 \leq C_{\mathrm P} \| \nabla \psi \|_{\mathrm L^2}^2$ with a constant $C_{\mathrm P} > 0$. Choosing $\rho \coleq \underline \varepsilon/C_{\mathrm P}$ and recalling $n_\infty = p_\infty$ entail 
		\begin{align}
			\frac{1}{2} \int_{\Omega} \varepsilon | \nabla \psi |^2 \, \dd x &\leq \frac{C_{\mathrm P}}{2 \underline \varepsilon} \int_\Omega (n - p)^2 \, \dd x \nonumber \\ 
			&\leq \frac{C_{\mathrm P}}{\underline \varepsilon} \int_\Omega \Big( (n-n_\infty)^2 + (p-p_\infty)^2 \Big) \, \dd x. \label{eq:poincare-nabla-psi-n-p}
		\end{align}	
		An analogous bound holds for $(2 \theta_\infty)^{-1} \int_\Omega \varepsilon | \nabla \psi |^2 \, \dd x$ as the equilibrium temperature $\theta_\infty > 0$ is constant in space. Recalling the elementary inequality $z \log z - z + 1 \leq (z-1)^2$ for all $z > 0$ as well as $n_\infty = w(u_\infty)$, $w(0) > 0$, and the monotonicity of $u \mapsto w(u)$, we estimate 
		\begin{align*}
			&n \log \Big( \frac{n}{w(u)} \Big) - n + w(u) \leq w(u) \Big( \frac{n}{w(u)} - 1 \Big)^2 = \frac{1}{w(u)} \big( n - w(u) \big)^2 \\ 
			&\quad \leq \frac{2}{w(u)} \Big( \big( n-n_\infty \big)^2 + \big( w(u_\infty)-w(u) \big)^2 \Big) \leq \frac{2}{w(0)} \Big( (n-n_\infty)^2 + w'(0)^2 (u-u_\infty)^2 \Big). 
		\end{align*}
		Since $\hat \sigma$ and $w$ are continuously differentiable, non-decreasing, and concave, there exist positive constants $K_\sigma(c_u), K_w > 0$ such that the following bounds hold for all $u \geq c_u$: 
		\begin{align}
			-\big( \hat \sigma(u) - \hat \sigma'(u_\infty)(u - u_\infty) - \hat \sigma (u_\infty) \big) &\leq K_\sigma (u-u_\infty)^2, \label{eq:def-K-sigma} \\ 
			-\big( w(u) - w'(u_\infty)(u - u_\infty) - w(u_\infty) \big) &\leq K_w (u-u_\infty)^2. \label{eq:def-K-w}
		\end{align}
		Collecting all estimates above, we arrive at \eqref{eq:relative-entropy-upper-bound}. 
	\end{proof}
	
	Subsequently, we will partially reuse some ideas from \cite{FK18} and \cite{GG96} to show that the right-hand side of \eqref{eq:relative-entropy-upper-bound} can be bounded from above in terms of the right-hand side of \eqref{eq:dissipative-lower-bound}. 
	
	\begin{proposition}
		\label{prop:entropy-production-lower-bound}
		Let the hypotheses \ref{eq:hypo-w}--\ref{eq:hypo-sigma-w} and \ref{eq:reaction-rate} hold true. Then, there exists a positive constant $C_2 > 0$ satisfying 
		\begin{align}
			\label{eq:entropy-production-lower-bound}
			&\int_\Omega \Big( (n-n_\infty)^2 + (p-p_\infty)^2 + (u-u_\infty)^2 \Big) \, \dd x \leq C_2 \mathcal P(n, p, u) 
		\end{align}
		for all non-negative states $(n, p, u) \in \mathrm L^2_+(\Omega)^3$ subject to $\int_\Omega (n-p) \, \dd x = 0$, \ref{eq:lower-bound-theta}, and \ref{eq:upper-bound-u} as well as $\nabla \sqrt{c}$, $\sqrt{c} \nabla u$, $\frac{\sqrt{c}}{\theta} \nabla \psi$, $\frac{\sqrt{c}}{\theta} \nabla \theta \in \mathrm L^2(\Omega)^d$ for $c = n, p$. A feasible choice is 
		\begin{align}
			\label{eq:def-c2}
			C_2 \coleq \Big( 2 + \max \big\{ 4 w'(0)^2 - 1, \, 0 \big\} \Big) \widetilde C_2, 
		\end{align}
		where we abbreviate 
		\begin{align}
			\widetilde C_2 &\coleq \bigg( \max \Big\{ 1, \, \frac{\hat \sigma'(C_u)}{4 \overline \varepsilon c_F} \Big\} + 2 \max \big\{ G_\sigma(c_u), \, G_w \big\}^2 \, \bigg) \nonumber \\ 
			&\qquad \times \frac{2 \overline \varepsilon}{1 - 2 g_w} \max\bigg\{ \frac{1}{\hat \sigma'(C_u)}, \ \frac{C_\mathrm{P}}{\underline \varepsilon} \bigg( \frac{C_{\mathrm P} w(C_u)^2}{4 \underline \varepsilon c_\theta w'(C_u)^2} - \frac{1}{\hat \sigma''(C_u)} \bigg) \bigg\} \label{eq:def-c2-tilde}
		\end{align}
		with the lower bound $c_F > 0$ for the reaction rate in \ref{eq:reaction-rate}, Poincar\'e's constant $C_\mathrm{P} > 0$ used in \eqref{eq:poincare-nabla-psi-n-p} and \eqref{eq:poincare}, and $G_\sigma(c_u) > 0$ defined in \eqref{eq:condition-eep-inequality-final}. 
	\end{proposition}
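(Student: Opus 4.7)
The plan is to decompose each squared deviation on the left-hand side of \eqref{eq:entropy-production-lower-bound} by inserting the spatial average, writing $n - n_\infty = (n - \overline{n}) + (\overline{n} - n_\infty)$ and analogously for $p$ and $u$, so that $\int_\Omega (n - n_\infty)^2 \, \dd x \leq 2 \int_\Omega (n - \overline{n})^2 \, \dd x + 2 |\Omega| (\overline{n} - n_\infty)^2$. The oscillatory pieces $\| n - \overline{n} \|_{\mathrm L^2}^2$, $\| p - \overline{p} \|_{\mathrm L^2}^2$, and $\| u - \overline{u} \|_{\mathrm L^2}^2$ will be controlled by Poincar\'e together with the gradient-squared terms in Lemma \ref{lemma:dissipative-lower-bound}, while the average deviations $(\overline{n} - n_\infty)^2$, $(\overline{p} - p_\infty)^2$, $(\overline{u} - u_\infty)^2$ will be extracted from the reaction term combined with charge neutrality $\int_\Omega (n - p) \, \dd x = 0$ and the equilibrium identity $n_\infty = p_\infty = w(u_\infty)$ from Proposition~\ref{prop:equilibrium-special}.

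For the oscillations, note that $\tfrac{1}{n} |\nabla n - \tfrac{n}{\theta} \nabla \psi|^2 = 4 |\nabla \sqrt{n} - \tfrac{\sqrt{n}}{2 \theta} \nabla \psi|^2$, so splitting off the drift via Young's inequality gives $\tfrac{1}{n}|\nabla n|^2 \leq 2 \cdot \tfrac{1}{n} |\nabla n - \tfrac{n}{\theta} \nabla \psi|^2 + \tfrac{2 n}{\theta^2} |\nabla \psi|^2$. The cross term $\int_\Omega \tfrac{2 n}{\theta^2} |\nabla \psi|^2 \, \dd x$ is bounded, via the uniform $\mathrm L^\infty$ bounds on $n$ and $\theta^{-1}$ available under \ref{eq:lower-bound-theta}--\ref{eq:upper-bound-u}, by a constant times $\| \nabla \psi \|_{\mathrm L^2}^2 \leq \tfrac{C_\mathrm{P}}{\underline \varepsilon} \| n - p \|_{\mathrm L^2}^2$ coming from Poisson's equation \eqref{eq:poisson-np}. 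Poincar\'e then yields $\int_\Omega (\sqrt{n} - \overline{\sqrt{n}})^2 \, \dd x \leq C_\mathrm{P} \int_\Omega |\nabla \sqrt{n}|^2 \, \dd x$, which together with the $\mathrm L^\infty$ bound on $n$ promotes to control of $\int_\Omega (n - \overline{n})^2 \, \dd x$; the same works for $p$. For $u$, the weight $-\hat \sigma''(u) + ((w'/w)^2 - w''/w)(n+p)$ in the third line of \eqref{eq:dissipative-lower-bound} is bounded below by $|\hat \sigma''(C_u)|$, so Poincar\'e on $u$ controls $\int_\Omega (u - \overline{u})^2 \, \dd x$ directly.

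For the averages, the reaction contribution $\int_\Omega R (\DD_n S + \DD_p S) \, \dd x = \int_\Omega F(n,p,u) (w(u)^2 - np) \log \tfrac{w(u)^2}{np} \, \dd x$ is bounded below, using $F \geq c_F$, the uniform $\mathrm L^\infty$ bounds on $n, p, w(u)$, and the elementary inequality $(a-b) \log(a/b) \geq (a-b)^2 / \max(a, b)$, by a positive constant times $\int_\Omega (w(u)^2 - np)^2 \, \dd x$. Charge neutrality forces $\overline{n} = \overline{p}$, and combined with $n_\infty = p_\infty$ this reduces the average deviations to a $2$-scalar system in $\overline{n} - n_\infty$ and $\overline{u} - u_\infty$. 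Writing $w(u)^2 - np = (w(u)^2 - w(u_\infty)^2) + (n_\infty p_\infty - np)$, expanding $np - n_\infty p_\infty = n(p - p_\infty) + p_\infty (n - n_\infty)$, and using the pointwise monotonicity and Lipschitz-type bounds for $w$ on $[c_u, C_u]$, one converts the reaction bound into an integrated control of $(u - u_\infty)^2$ up to the oscillatory $\mathrm L^2$ deviations of $n$ and $p$ already handled above.

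The main obstacle is the self-coupling through Poisson's equation: each application of Poincar\'e to $\sqrt{n}$ or $\sqrt{p}$ generates a $\| \nabla \psi \|_{\mathrm L^2}^2 \lesssim \| n - p \|_{\mathrm L^2}^2$ contribution, which is precisely the quantity being bounded, and the factor $w'(0)^2$ appearing in $C_2$ reflects the price paid for translating $(u - u_\infty)^2$ bounds into $(n - n_\infty)^2$, $(p - p_\infty)^2$ bounds through the $w(u)^2 - np$ structure. Closing this feedback loop forces a careful choice of Young constants and the absorption of part of $\| \nabla \psi \|_{\mathrm L^2}^2$ into the fourth integral of \eqref{eq:dissipative-lower-bound} involving $|\nabla \psi|^2$ explicitly; this interplay is responsible for the specific $\max \big\{ \tfrac{1}{\hat \sigma'(C_u)}, \tfrac{C_\mathrm{P}}{\underline \varepsilon} \big( \tfrac{C_\mathrm{P} w(C_u)^2}{4 \underline \varepsilon c_\theta w'(C_u)^2} - \tfrac{1}{\hat \sigma''(C_u)} \big) \big\}$ structure in \eqref{eq:def-c2-tilde}.
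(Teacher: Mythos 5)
There are two genuine gaps here, and the second one is fatal to the strategy as proposed. First, your plan never invokes the energy conservation constraint $\int_\Omega \tfrac{\varepsilon}{2}|\nabla \psi|^2 \, \dd x + \int_\Omega u \, \dd x = |\Omega| u_\infty$ (equation \eqref{eq:energy-conservation-eep-inequality} in the paper), yet without it the average deviation $\overline{u} - u_\infty$ cannot be controlled by $\mathcal P$ at all: any constant state $(n,p,u) = (w(u_*), w(u_*), u_*)$ with $u_* \neq u_\infty$ satisfies charge neutrality, has vanishing gradients, and annihilates the reaction term since $np = w(u)^2$, so $\mathcal P = 0$ while the left-hand side of \eqref{eq:entropy-production-lower-bound} is strictly positive. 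The reaction term plus charge neutrality only forces $np \approx w(u)^2$, which leaves a one-parameter family of spurious ``equilibria''; it is precisely the energy constraint that selects $u_\infty$. This is also where the $(n-p)^4$ term and the factor $c_\theta w'(C_u)^2 / w(C_u)^2$ in $\widetilde C_2$ originate in the paper: replacing $\overline u$ by $u_\infty$ costs $\big(\int_\Omega \tfrac{\varepsilon}{2}|\nabla\psi|^2\,\dd x\big)^2 \lesssim \int_\Omega (n-p)^4\,\dd x$, which is then absorbed using $\theta^{-1} \geq (n+p) w'(C_u)/w(C_u)$.

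Second, your route through Poincar\'e on $\sqrt n$ and $\sqrt p$ does not close the Poisson feedback loop. Splitting $\tfrac1n|\nabla n - \tfrac n\theta \nabla\psi|^2$ by Young leaves an error $\int_\Omega \tfrac{2n}{\theta^2}|\nabla\psi|^2\,\dd x \lesssim \|n-p\|_{\mathrm L^2}^2 \lesssim \|n-n_\infty\|_{\mathrm L^2}^2 + \|p-p_\infty\|_{\mathrm L^2}^2$ with a constant of order $\|n\|_\infty C_\mathrm{P}/(\underline\varepsilon c_\theta^2)$ that is in no way small, so it cannot be absorbed into the quantity being estimated; and the fourth integral of \eqref{eq:dissipative-lower-bound}, which you propose to use for absorption, carries the degenerate weight $(n-p)^2 (w'/w)^2 (-\hat\sigma'' - \tfrac{w''}{w}(n+p))^{-1}\theta^{-2}$ and vanishes wherever $n=p$, so it cannot dominate $\tfrac{n}{\theta^2}|\nabla\psi|^2$ either. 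The paper sidesteps both issues by never extracting $|\nabla n|^2$ or applying Poincar\'e to the densities: it keeps only the cross terms of the two squares in the second line of \eqref{eq:dissipative-lower-bound}, integrates $-\int_\Omega \tfrac{2}{\overline\varepsilon\theta}\nabla(n-p)\cdot\varepsilon\nabla\psi\,\dd x$ by parts against Poisson's equation to produce $\int_\Omega \tfrac{2}{\overline\varepsilon\theta}(n-p)^2\,\dd x$ with a favorable sign (the leftover $\nabla\theta^{-1}\cdot\nabla\psi$ term being handled by \eqref{eq:bound-nabla-1-theta}), and then converts $(n-p)^2 = (n-w(u))^2 - 2(n-w(u))(p-w(u)) + (p-w(u))^2$ into the desired squares by controlling the cross term with the Gajewski--G\"artner estimate $(np - w^2)\log(np/w^2) \geq 4(\sqrt{np}-w)^2 \geq 4(n-w)(p-w)$ applied to the reaction integral. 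You would need to incorporate both the energy constraint and some analogue of this integration-by-parts mechanism for your argument to go through.
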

	
	\begin{proof}
	We expand the squares in the second line of \eqref{eq:dissipative-lower-bound} in Lemma \ref{lemma:dissipative-lower-bound} and integrate by parts taking Poisson's equation \eqref{eq:poisson-np} with homogeneous Neumann data into account: 
	\begin{align*}
		&\int_\Omega \frac{\varepsilon}{\overline \varepsilon} \bigg( \frac{1}{n} \Big| \nabla n - \frac{n}{\theta} \nabla \psi \Big|^2 + \frac{1}{p} \Big| \nabla p + \frac{p}{\theta} \nabla \psi \Big|^2 \bigg) \, \dd x \geq -\int_\Omega \frac{2}{\overline \varepsilon \theta} \nabla (n - p) \cdot \varepsilon \nabla \psi \, \dd x \\ 
		&\qquad = \int_\Omega \frac{2}{\overline \varepsilon \theta} (n - p)^2 \, \dd x + \frac{2}{\overline \varepsilon} \int_\Omega (n - p) \nabla \frac{1}{\theta} \cdot \varepsilon \nabla \psi \, \dd x. 
	\end{align*}
	The second integral can be controlled as in \eqref{eq:nabla-1-theta-nabla-psi-estimate}, where the $\nabla \psi$ term cancels exactly with the last line of \eqref{eq:dissipative-lower-bound}, whereas the $\nabla \theta^{-1}$ term is bounded by $\mathcal P(n, p, u)$ using \eqref{eq:bound-nabla-1-theta}. (Note that we have an additional factor $1/2-g_w$ from \eqref{eq:dissipative-lower-bound} here instead of $2\rho = g_w+1/2$ in \eqref{eq:nabla-1-theta-nabla-psi-estimate}.) Keeping only $-\hat \sigma''(u) |\nabla u|^2$ in the third line of \eqref{eq:dissipative-lower-bound}, this leads to 
	\begin{align}
		&\big( 1 + 2 \max \{ G_\sigma(c_u), G_w \}^2 \big) \mathcal P(n, p, u) \geq \int_\Omega \big( \DD_n S + \DD_p S \big) R \; \dd x \nonumber \\ 
		&\quad + \Big( \frac12 - g_w \Big) \frac{1}{\overline \varepsilon} \int_\Omega \frac{2}{\theta} (n - p)^2 \, \dd x + \Big( \frac12 - g_w \Big) \frac{\underline \varepsilon}{\overline \varepsilon} \int_\Omega \big( {-} \hat \sigma''(u) \big) |\nabla u|^2 \, \dd x. \label{eq:entropy-production-n-p-nabla-u}
	\end{align}
	From the imposed uniform upper bound $u \leq C_u$ in \ref{eq:upper-bound-u} and the energy conservation law 
	\begin{align}
		\label{eq:energy-conservation-eep-inequality}
		\int_\Omega \frac{\varepsilon}{2} |\nabla \psi|^2 \, \dd x + \int_\Omega u \, \dd x = |\Omega| u_\infty, 
	\end{align}
	we infer using Poincar\'e's inequality 
	\begin{align}
		\label{eq:poincare}
		\Big\| u - |\Omega|^{-1} \int_\Omega u \, \dd x \Big\|_{\mathrm L^2(\Omega)}^2 \leq C_{\mathrm P} \| \nabla u \|_{\mathrm L^2(\Omega)}^2
	\end{align}
	 with $C_{\mathrm P} > 0$ that 
	\begin{align}
		\label{eq:poincare-u}
		&\int_\Omega \big( {-} \hat \sigma''(u) \big) |\nabla u|^2 \, \dd x \geq {-} \hat \sigma''(C_u) \, C_{\mathrm P}^{-1} \int_\Omega \Big( u - u_\infty + \frac{1}{|\Omega|} \int_\Omega \frac{\varepsilon}{2} |\nabla \psi|^2 \, \dd x \Big)^2 \, \dd x.
	\end{align}
	One can actually choose the same constant $C_{\mathrm P}$ as in \eqref{eq:poincare-nabla-psi-n-p}. To derive a bound for $\int_\Omega (u - u_\infty)^2 \, \dd x$, we need to control $\int_\Omega \varepsilon |\nabla \psi|^2 \, \dd x$, which is achieved by recalling the first line of \eqref{eq:poincare-nabla-psi-n-p}. To be more precise, we employ Young's inequality with $\rho > 0$ specified below followed by H\"older's inequality $\| f \|_{\mathrm L^1(\Omega)}^2 \leq |\Omega| \| f \|_{\mathrm L^2(\Omega)}^2$ to get 
	\begin{align*}
		&\int_\Omega \big( {-} \hat \sigma''(u) \big) |\nabla u|^2 \, \dd x \geq {-} \hat \sigma''(C_u) \, C_{\mathrm P}^{-1} \bigg( \frac{\rho}{1+\rho} \int_\Omega ( u - u_\infty )^2 \, \dd x - \frac{\rho}{|\Omega|} \Big( \int_\Omega \frac{\varepsilon}{2} |\nabla \psi|^2 \, \dd x \Big)^2 \bigg)  \\ 
		&\quad \geq {-} \hat \sigma''(C_u) \, C_{\mathrm P}^{-1} \bigg( \frac{\rho}{1+\rho} \int_\Omega ( u - u_\infty )^2 \, \dd x - \rho \frac{C_{\mathrm P}^2}{4 \underline \varepsilon^2} \int_\Omega (n-p)^4 \, \dd x \bigg). 
	\end{align*}
	We now need to deal with $(n-p)^4$, which arises from the fact that the internal energy $u$ enters the energy constraint \eqref{eq:energy-conservation-eep-inequality} linearly, whereas the electrostatic energy depends quadratically on $\nabla \psi$. With the lower bounds $\theta \geq c_\theta$, $\theta^{-1} = \hat \sigma'(u) + (n+p) w'(u)/w(u) \geq \hat \sigma'(C_u)$, and $\theta^{-1} \geq (n+p) w'(C_u)/w(C_u)$ obtained from $u \leq C_u$, we calculate 
	\begin{align}
		&\int_\Omega \frac{2}{\theta} (n - p)^2 \, \dd x + \underline \varepsilon \int_\Omega \big( {-} \hat \sigma''(u) \big) |\nabla u|^2 \, \dd x \nonumber \\ 
		&\quad \geq \hat \sigma'(C_u) \int_\Omega (n - p)^2 \, \dd x + \int_\Omega c_\theta \frac{w'(C_u)^2}{w(C_u)^2} (n-p)^4 \, \dd x \nonumber \\ 
		&\qquad -\!\hat \sigma''(C_u) \frac{\underline \varepsilon}{C_{\mathrm P}} \bigg( \frac{\rho}{1+\rho} \int_\Omega ( u - u_\infty )^2 \, \dd x - \rho \frac{C_{\mathrm P}^2}{4 \underline \varepsilon^2} \int_\Omega (n-p)^4 \, \dd x \bigg). \label{eq:theta-bound-needed}
	\end{align}
	According to the last estimate, we choose 
	\begin{align*}
		\rho \coleq \frac{4 \underline \varepsilon c_\theta}{C_{\mathrm P}} \frac{w'(C_u)^2}{{-}\hat \sigma''(C_u) w(C_u)^2}, 
	\end{align*}
	which results in 
	\begin{align}
		&\int_\Omega \frac{2}{\theta} (n - p)^2 \, \dd x + \underline \varepsilon \int_\Omega \big( {-} \hat \sigma''(u) \big) |\nabla u|^2 \, \dd x \nonumber \\ 
		&\quad \geq \hat \sigma'(C_u) \int_\Omega \Big( \big( n-w(u) \big)^2 - 2 \big( n-w(u) \big) \big( p-w(u) \big) + \big( p-w(u) \big)^2 \Big) \, \dd x \nonumber \\ 
		&\qquad + \frac{\underline \varepsilon}{C_{\mathrm P}} \bigg( \frac{C_{\mathrm P} w(C_u)^2}{4 \underline \varepsilon c_\theta w'(C_u)^2} - \frac{1}{\hat \sigma''(C_u)} \bigg)^{-1} \int_\Omega ( u - u_\infty )^2 \, \dd x. \label{eq:n-p-nabla-u-estimate}
	\end{align}
	The bilinear expression $\big( n-w(u) \big) \big( p-w(u) \big)$ can be treated with the same technique as in \cite{FK18} by taking the reactive part of the entropy production $\big( \DD_n S + \DD_p S \big) R$ into account. To this end, we note that \eqref{eq:d-s} and the reaction term $R(n, p, u) = F(n, p, u) \big( w(u)^2 - np \big)$ with strictly positive reaction rate $F(n, p, u) \geq c_F > 0$ entail 
	\begin{align}
		\label{eq:entropy-reaction}
		\big( \DD_n S + \DD_p S \big) R = F(n, p, u) \big( np - w(u)^2 \big) \log \frac{np}{w(u)^2} \geq 0, 
	\end{align}
	which yields $n_\infty p_\infty = w(u_\infty)^2$ due to $\mathcal P(n_\infty, p_\infty, u_\infty) = 0$ independently of Proposition \ref{prop:equilibrium-special}. The key step, which originally goes back to \cite{GG96}, is as simple as elegant. Starting with the elementary inequality $(x-y) \log (x/y) \geq 4 (\sqrt x - \sqrt y)^2$ for $x \geq 0$ and $y > 0$, we deduce 
	\begin{align*}
		&\big( np - w(u)^2 \big) \log \frac{np}{w(u)^2} \geq 4 \big( \sqrt{np} - w(u) \big)^2 \\ 
		&\quad = \Big( \big(\sqrt{n} - \sqrt{w(u)}\big) \big(\sqrt{p} + \sqrt{w(u)}\big) - \big(\sqrt{n} + \sqrt{w(u)}\big) \big(\sqrt{p} - \sqrt{w(u)}\big) \Big)^2 \\ 
		&\qquad + 4 \big(\sqrt{n} - \sqrt{w(u)}\big) \big(\sqrt{p} + \sqrt{w(u)}\big) \big(\sqrt{n} + \sqrt{w(u)}\big) \big(\sqrt{p} - \sqrt{w(u)}\big) \\ 
		&\quad \geq \, 4 \big( n - w(u) \big) \big( p - w(u) \big). 
	\end{align*}
	As a consequence, we find 
	\begin{align*}
		2 \hat \sigma'(C_u) \int_\Omega \big( n - w(u) \big) \big( p - w(u) \big) \, \dd x \leq \frac{\hat \sigma'(C_u)}{2 c_F} \int_\Omega \big( \DD_n S + \DD_p S \big) R \; \dd x,
	\end{align*}
	which in combination with \eqref{eq:n-p-nabla-u-estimate} and \eqref{eq:entropy-production-n-p-nabla-u} leads to 
	\begin{align*}
		&\big( 1 + 2 \max \{ G_\sigma(c_u), G_w \}^2 \big) \mathcal P(n, p, u) + \Big( \frac{\hat \sigma'(C_u)}{4 \overline \varepsilon c_F} - 1 \Big) \int_\Omega \big( \DD_n S + \DD_p S \big) R \; \dd x \nonumber \\ 
		&\quad \geq \Big( \frac12 - g_w \Big) \frac{1}{\overline \varepsilon} \bigg[ \hat \sigma'(C_u) \int_\Omega \Big( \big( n-w(u) \big)^2 + \big( p-w(u) \big)^2 \Big) \, \dd x \\ 
		&\qquad + \frac{\underline \varepsilon}{C_{\mathrm P}} \bigg( \frac{C_{\mathrm P} w(C_u)^2}{4 \underline \varepsilon c_\theta w'(C_u)^2} - \frac{1}{\hat \sigma''(C_u)} \bigg)^{-1} \int_\Omega ( u - u_\infty )^2 \, \dd x \bigg]. 
	\end{align*}
	This proves the preliminary bound 
	\begin{align*}
		\int_\Omega \Big( \big( n-w(u) \big)^2 + \big( p-w(u) \big)^2 + (u-u_\infty)^2 \Big) \, \dd x \leq \widetilde C_2 \mathcal P(n, p, u) 
	\end{align*}
	with $\widetilde C_2$ defined in \eqref{eq:def-c2-tilde}. The assertion in \eqref{eq:entropy-production-lower-bound} now easily follows from $n_\infty = w(u_\infty)$ and 
	\begin{align*}
		\int_\Omega \big( n-n_\infty \big)^2 \, \dd x \leq 2 \int_\Omega \Big( \big( n-w(u) \big)^2 + \big( w(u)-w(u_\infty) \big)^2 \, \dd x 
	\end{align*}
	together with the concavity of $w(u)$ and an analogous argument for $p-p_\infty$. 
	\end{proof}

	\subsection{Proof of Corollary \ref{cor:exp-conv}}
	\label{subsec:proof-cor}

	Having finished the proof of the EEP inequality \eqref{eq:EEP-inequality} in Theorem \ref{thm:EEP-inequality}, we are left to establish the exponential convergence result in Corollary \ref{cor:exp-conv}. This follows essentially from a Gronwall lemma and a Csisz\'ar--Kullback--Pinsker estimate. 

	\begin{proof}[Proof of Corollary \ref{cor:exp-conv}]
		Let $(n, p, u)$ be a global weak solution to the EERDS \eqref{eq:system-np}--\eqref{eq:flux-np} with initial data $(n_0, p_0, u_0) \in \mathrm L^2_+(\Omega)^3$. The EEP estimate \eqref{eq:EEP-inequality} and a Gronwall argument give rise to 
		\begin{align*}
		\mathcal H \big( n(t), p(t), u(t) \, | \, n_\infty, p_\infty, u_\infty \big) \leq \mathcal H \big( n_0, p_0, u_0 \, | \, n_\infty, p_\infty, u_\infty \big) \exp \Big( {-}\frac{t}{C_1 C_2} \Big) 
		\end{align*}
		for all $t \geq 0$ with the constants $C_1, C_2 > 0$ from \eqref{eq:def-c1} and \eqref{eq:def-c2}. Another ingredient of the proof is the following Csisz\'ar--Kullback--Pinsker-type inequality, which holds for arbitrary measurable and non-negative functions $f, g : \Omega \rightarrow \mathbb R$ with $g$ being strictly positive: 
		\begin{align*}
			\int_\Omega \bigg( f \log \frac{f}{g} - f + g \bigg) dx \geq \frac{3}{2 \|f\|_{\mathrm L^1(\Omega)} + 4 \|g\|_{\mathrm L^1(\Omega)}} \| f - g \|_{\mathrm L^1(\Omega)}^2.
		\end{align*}
		We refer, e.g., to \cite[Lemma 4.1]{FK21} for a proof. Together with the representation of the relative entropy in \eqref{eq:repres-rel-entropy} and suppressing the dependence of $n$, $p$, $u$, and $\psi$ on the time $t$, this leads to 
		\begin{align*}
			\frac{3}{2 |\Omega| \frac{w(C_u)}{c_\theta w'(C_u)} + 4 |\Omega| w(C_u)} \big\| n - w(u) \big\|_{\mathrm L^1(\Omega)}^2 \leq \mathcal H \big( n, p, u \, | \, n_\infty, p_\infty, u_\infty \big)
		\end{align*}
		and to an analogous estimate involving $p$ using the pointwise bounds $n, p \leq w(C_u)/(c_\theta w'(C_u))$ from Remark \ref{rem:reaction}. Likewise, we find 
		\begin{align*}
			\frac{2 k_w}{w'(0)^2} \int_\Omega \big( w(u) - w(u_\infty) \big)^2 \, \dd x \leq \mathcal H \big( n, p, u \, | \, n_\infty, p_\infty, u_\infty \big), 
		\end{align*}
		where the uniform concavity of $w(u)$ on $[0, C_u]$ yields a positive constant $k_w(C_u) > 0$ satisfying 
		\begin{align}
			\label{eq:def-k-w}
			-\big( w(u) - w'(u_\infty)(u - u_\infty) - w(u_\infty) \big) \geq k_w (u-u_\infty)^2 
		\end{align}
		for all $u \in [0, C_u]$, and where we estimated $(w(u) - w(u_\infty))^2 \leq w'(0)^2 (u-u_\infty)^2$ for all $u \geq 0$. 
		The following bound now easily follows from $n_\infty = w(u_\infty)$ and $\|f\|_{\mathrm L^1(\Omega)}^2 \leq |\Omega| \|f\|_{\mathrm L^2(\Omega)}^2$: 
		\begin{align*}
			\| n - n_\infty \|_{\mathrm L^1(\Omega)}^2 &\leq 2 \Big( \big\| n - w(u) \big\|_{\mathrm L^1(\Omega)}^2 + \big\| w(u) - n_\infty \big\|_{\mathrm L^1(\Omega)}^2 \Big) \\ 
			&\leq 2 |\Omega| \bigg( \frac{2 \frac{w(C_u)}{c_\theta w'(C_u)} + 4 w(C_u)}{3} + \frac{w'(0)^2}{2 k_w} \bigg) \mathcal H \big( n, p, u \, | \, n_\infty, p_\infty, u_\infty \big). 
		\end{align*}
		A lower bound for the relative entropy in terms of the internal energy $u$ is obtained in the form 
		\begin{align*}
			\frac{k_\sigma}{|\Omega|} \| u - u_\infty \|_{\mathrm L^1(\Omega)}^2 \leq k_\sigma \int_\Omega ( u - u_\infty )^2 \, \dd x \leq \mathcal H \big( n, p, u \, | \, n_\infty, p_\infty, u_\infty \big), 
		\end{align*}
		where $k_\sigma(C_u) > 0$ is a positive constant such that 
		\begin{align}
			\label{eq:def-k-sigma}
			-\big( \hat \sigma(u) - \hat \sigma'(u_\infty)(u - u_\infty) - \hat \sigma (u_\infty) \big) \geq k_\sigma (u-u_\infty)^2
		\end{align}
		holds for all $u \in (0, C_u]$ recalling the uniform concavity of $\hat \sigma(u)$ on $(0, C_u]$. Finally, we arrive at 
		\begin{align*}
			\frac{\underline \varepsilon}{2 (1 + C_\mathrm{P}) \theta_\infty} \| \psi \|_{\mathrm H^1}^2 \leq \frac{\underline \varepsilon}{2 \theta_\infty} \int_\Omega | \nabla \psi |^2 \, \dd x \leq \mathcal H \big( n, p, u \, | \, n_\infty, p_\infty, u_\infty \big), 
		\end{align*}
		which ensures the desired bound on the electrostatic potential, where $C_\mathrm{P} > 0$ is Poincar\'e's constant as employed in \eqref{eq:poincare-nabla-psi-n-p} and \eqref{eq:poincare}. 
	\end{proof}

	\paragraph*{Acknowledgments.} 
	This research was funded in whole by the Austrian Science Fund (FWF) 10.55776/J4604. 
	For the purpose of Open Access, the author has applied a CC BY public copyright license to any Author Accepted Manuscript (AAM) version arising from this submission. 
	The author would like to thank Klemens Fellner, Katharina Hopf, and Alexander Mielke for inspiring and helpful discussions.

	\begin{bibdiv}
		\begin{biblist}[\small]
			\bib{AGH02}{article}{
				AUTHOR = {Albinus, G.},
				AUTHOR = {Gajewski, H.},
				AUTHOR = {H\"{u}nlich, R.},
				TITLE = {Thermodynamic design of energy models of semiconductor devices},
				JOURNAL = {Nonlinearity},
				FJOURNAL = {Nonlinearity},
				VOLUME = {15},
				YEAR = {2002},
				NUMBER = {2},
				PAGES = {367--383},
				ISSN = {0951-7715},
				MRCLASS = {82D37 (35K40 35K55 47N20)},
				MRNUMBER = {1888855},
				MRREVIEWER = {Alexander M. Blokhin},
				DOI = {\doiref{10.1088/0951-7715/15/2/307}},
			}
			
			\bib{AMT00}{article}{
				author={Arnold, A.},
				author={Markowich, P.},
				author={Toscani, G.},
				title={On large time asymptotics for drift-diffusion-Poisson systems},
				booktitle={Proceedings of the Fifth International Workshop on
					Mathematical Aspects of Fluid and Plasma Dynamics (Maui, HI, 1998)},
				journal={Transport Theory Statist. Phys.},
				volume={29},
				date={2000},
				number={3-5},
				pages={571--581},
				issn={0041-1450},
				doi={\doiref{10.1080/00411450008205893}},
			}
			
			\bib{BPZ17}{article}{
				AUTHOR = {Bul\'i\v cek, M.}, 
				AUTHOR = {Pokorn\'y, M.}, 
				AUTHOR = {Zamponi, Nicola},
				TITLE = {Existence analysis for incompressible fluid model of
					electrically charged chemically reacting and heat conducting
					mixtures},
				JOURNAL = {SIAM J. Math. Anal.},
				FJOURNAL = {SIAM Journal on Mathematical Analysis},
				VOLUME = {49},
				YEAR = {2017},
				NUMBER = {5},
				PAGES = {3776--3830},
				ISSN = {0036-1410,1095-7154},
				MRCLASS = {35Q35 (35A01 35D30 76A05 76T99 76V05 80A32 92C05)},
				MRNUMBER = {3706914},
				MRREVIEWER = {Luc\ Paquet},
				DOI = {\doiref{10.1137/16M110931X}},
			}
			
			\bib{DF06}{article}{
				author={Desvillettes, L.},
				author={Fellner, K.},
				title={Exponential decay toward equilibrium via entropy methods for
					reaction-diffusion equations},
				journal={J. Math. Anal. Appl.},
				volume={319},
				date={2006},
				number={1},
				pages={157--176},
				issn={0022-247X},
				doi={\doiref{10.1016/j.jmaa.2005.07.003}},
			}
			
			\bib{DFT17}{article}{
				author={Desvillettes, L.},
				author={Fellner, K.},
				author={Tang, B. Q.},
				title={Trend to equilibrium for reaction-diffusion systems arising from
					complex balanced chemical reaction networks},
				journal={SIAM J. Math. Anal.},
				volume={49},
				date={2017},
				number={4},
				pages={2666--2709},
				issn={0036-1410},
				doi={\doiref{10.1137/16M1073935}},
			}
			
			\bib{DFFM08}{article}{
				author={Di Francesco, M.},
				author={Fellner, K.},
				author={Markowich, P. A.},
				title={The entropy dissipation method for spatially inhomogeneous
					reaction--diffusion-type systems},
				journal={Proc. R. Soc. Lond. Ser. A Math. Phys. Eng. Sci.},
				volume={464},
				date={2008},
				number={2100},
				pages={3273--3300},
				issn={1364-5021},
				doi={\doiref{10.1098/rspa.2008.0214}},
			}
			
			\bib{FK18}{article}{
				AUTHOR = {Fellner, K.},
				AUTHOR = {Kniely, M.},
				TITLE = {On the entropy method and exponential convergence to
					equilibrium for a recombination-drift-diffusion system with
					self-consistent potential},
				JOURNAL = {Appl. Math. Lett.},
				FJOURNAL = {Applied Mathematics Letters. An International Journal of Rapid
					Publication},
				VOLUME = {79},
				YEAR = {2018},
				PAGES = {196--204},
				ISSN = {0893-9659},
				MRCLASS = {82C40 (35B40 35M33)},
				MRNUMBER = {3748631},
				DOI = {\doiref{10.1016/j.aml.2017.12.017}},
			}
			
			\bib{FK20}{article}{
				AUTHOR = {Fellner, K.},
				AUTHOR = {Kniely, M.},
				TITLE = {Uniform convergence to equilibrium for a family of
					drift-diffusion models with trap-assisted recombination and
					the limiting {S}hockley-{R}ead-{H}all model},
				JOURNAL = {J. Elliptic Parabol. Equ.},
				FJOURNAL = {Journal of Elliptic and Parabolic Equations},
				VOLUME = {6},
				YEAR = {2020},
				NUMBER = {2},
				PAGES = {529--598},
				ISSN = {2296-9020},
				MRCLASS = {35K57 (35B40 35B45 82D37)},
				MRNUMBER = {4169444},
				DOI = {\doiref{10.1007/s41808-020-00068-8}},
			}
			
			\bib{FK21}{article}{
				author = {Fellner, K.}, 
				author = {Kniely, M.},
				title = {Uniform convergence to equilibrium for a family of drift--diffusion models with trap-assisted recombination and self-consistent potential},
				journal = {Math. Methods Appl. Sci.},
				fjournal = {Mathematical Methods in the Applied Sciences},
				year = {2021},
				volume = {44},
				number = {17},
				pages = {13040--13059},
				doi = {\doiref{10.1002/mma.7604}},
			}
			
			\bib{FT17}{article}{
				author={Fellner, K.},
				author={Tang, B. Q.},
				title={Explicit exponential convergence to equilibrium for nonlinear
					reaction-diffusion systems with detailed balance condition},
				journal={Nonlinear Anal.},
				volume={159},
				date={2017},
				pages={145--180},
				issn={0362-546X},
				doi={\doiref{10.1016/j.na.2017.02.007}},
			}
			
			\bib{FT18}{article}{
				author={Fellner, K.},
				author={Tang, B. Q.},
				title={Convergence to equilibrium of renormalised solutions to nonlinear
					chemical reaction-diffusion systems},
				journal={Z. Angew. Math. Phys.},
				volume={69},
				date={2018},
				number={3},
				pages={Paper No. 54, 30},
				issn={0044-2275},
				doi={\doiref{10.1007/s00033-018-0948-3}},
			}
			
			\bib{FHKM22}{article}{
				AUTHOR = {Fischer, J.},
				AUTHOR = {Hopf, K.}, 
				AUTHOR = {Kniely, M.}, 
				AUTHOR = {Mielke, A.},
				TITLE = {Global existence analysis of energy-reaction-diffusion systems},
				JOURNAL = {SIAM J. Math. Anal.},
				FJOURNAL = {SIAM Journal on Mathematical Analysis},
				VOLUME = {54},
				YEAR = {2022},
				NUMBER = {1},
				PAGES = {220--267},
				ISSN = {0036-1410},
				MRCLASS = {35Q79 (35K51 35K57 80-XX)},
				MRNUMBER = {4358029},
				DOI = {\doiref{10.1137/20M1387237}},
			}
			
			\bib{GG96}{article}{
				AUTHOR = {Gajewski, H.}, 
				AUTHOR = {G\"artner, K.},
				TITLE = {On the discretization of van {R}oosbroeck's equations with
					magnetic field},
				JOURNAL = {Z. Angew. Math. Mech.},
				FJOURNAL = {Zeitschrift f\"ur Angewandte Mathematik und Mechanik},
				VOLUME = {76},
				YEAR = {1996},
				NUMBER = {5},
				PAGES = {247--264},
				ISSN = {0044-2267,1521-4001},
				MRCLASS = {65M60 (35Q60 78A55 82D99)},
				MRNUMBER = {1390298},
				MRREVIEWER = {P.\ Rochus},
				DOI = {\doiref{10.1002/zamm.19960760502}},
			}
			
			\bib{GGH96}{article}{
				author={Glitzky, A.},
				author={Gr\"oger, K.},
				author={H\"unlich, R.},
				title={Free energy and dissipation rate for reaction diffusion processes of electrically charged species},
				journal={Appl. Anal.},
				volume={60},
				date={1996},
				number={3-4},
				pages={201--217},
				issn={0003-6811},
				doi={\doiref{10.1080/00036819608840428}},
			}
			
			\bib{GH97}{article}{
				author={Glitzky, A.},
				author={H\"unlich, R.},
				title={Energetic estimates and asymptotics for electro-reaction-diffusion systems},
				journal={Z. Angew. Math. Mech.},
				volume={77},
				date={1997},
				number={11},
				pages={823--832},
				issn={0044-2267},
				doi={\doiref{10.1002/zamm.19970771105}},
			}
			
			\bib{GH05}{article}{
				author={Glitzky, A.},
				author={H\"unlich, R.},
				title={Stationary energy models for semiconductor devices with incompletely ionized impurities},
				journal={Z. Angew. Math. Mech.},
				volume={85},
				date={2005},
				number={11},
				pages={778--792},
				issn={0044-2267},
				doi={\doiref{10.1002/zamm.200510230}},
			}
			
			\bib{H52}{article}{
				AUTHOR = {Hall, R. N.},
				TITLE = {Electron-Hole Recombination in Germanium},
				JOURNAL = {Phys. Rev.},
				VOLUME = {87},
				ISSUE = {2},
				PAGES = {387--387},
				YEAR = {1952},
				MONTH = {Jul},
				PUBLISHER = {American Physical Society},
				DOI = {\doiref{10.1103/PhysRev.87.387}},
			}
			
			\bib{HHMM18}{article}{
				AUTHOR = {Haskovec, J.},
				AUTHOR = {Hittmeir, S.},
				AUTHOR = {Markowich, P. A.},
				AUTHOR = {Mielke, A.},
				TITLE = {Decay to equilibrium for energy-reaction-diffusion systems},
				JOURNAL = {SIAM J. Math. Anal.},
				FJOURNAL = {SIAM Journal on Mathematical Analysis},
				VOLUME = {50},
				YEAR = {2018},
				NUMBER = {1},
				PAGES = {1037--1075},
				ISSN = {0036-1410},
				MRCLASS = {35K57 (35B40 35Q79)},
				MRNUMBER = {3759876},
				MRREVIEWER = {Joseph L. Shomberg},
				DOI = {\doiref{10.1137/16M1062065}},
			}
			
			\bib{HKM24}{article}{
				AUTHOR = {Hopf, K.}, 
				AUTHOR = {Kniely, M.}, 
				AUTHOR = {Mielke, A.},
				TITLE = {On the equilibrium solutions of electro--energy--reaction--diffusion systems},
				STATUS = {preprint}, 
				YEAR = {2024}, 
				DOI = {\doiref{10.48550/arXiv.2405.17289}},
			}
			
			\bib{M11}{article}{
				AUTHOR = {Mielke, A.},
				TITLE = {A gradient structure for reaction-diffusion systems and for
					energy-drift-diffusion systems},
				JOURNAL = {Nonlinearity},
				FJOURNAL = {Nonlinearity},
				VOLUME = {24},
				YEAR = {2011},
				NUMBER = {4},
				PAGES = {1329--1346},
				ISSN = {0951-7715,1361-6544},
				MRCLASS = {37L05 (35K57 82D37)},
				MRNUMBER = {2776123},
				MRREVIEWER = {Marzia\ Bisi},
				DOI = {\doiref{10.1088/0951-7715/24/4/016}},
			}
			
			
			\bib{M15}{article}{
				author={Mielke, A.},
				title={On thermodynamical couplings of quantum mechanics and macroscopic systems},
				conference={
					title={Mathematical results in quantum mechanics},
				},
				book={
					publisher={World Sci. Publ., Hackensack, NJ},
				},
				isbn={978-981-4618-13-7},
				date={2015},
				pages={331--348},
				doi = {\doiref{10.1142/9250}},
			}
			
			\bib{M17}{article}{
				AUTHOR = {Mielke, A.},
				TITLE = {Uniform exponential decay for reaction-diffusion systems with
					complex-balanced mass-action kinetics},
				BOOKTITLE = {Patterns of dynamics},
				SERIES = {Springer Proc. Math. Stat.},
				VOLUME = {205},
				PAGES = {149--171},
				PUBLISHER = {Springer, Cham},
				YEAR = {2017},
				MRCLASS = {35K51 (35B40 35K57 92E20)},
				MRNUMBER = {3775410},
				DOI = {\doiref{10.1007/978-3-319-64173-7\textunderscore10}},
			}
			
			\bib{MHM15}{article}{
				AUTHOR = {Mielke, A.},
				AUTHOR = {Haskovec, J.},
				AUTHOR = {Markowich, P. A.},
				TITLE = {On uniform decay of the entropy for reaction-diffusion
					systems},
				JOURNAL = {J. Dynam. Differential Equations},
				FJOURNAL = {Journal of Dynamics and Differential Equations},
				VOLUME = {27},
				YEAR = {2015},
				NUMBER = {3-4},
				PAGES = {897--928},
				ISSN = {1040-7294},
				MRCLASS = {35K57 (35A23 35B40 92E20)},
				MRNUMBER = {3435138},
				MRREVIEWER = {Cecilia Cavaterra},
				DOI = {\doiref{10.1007/s10884-014-9394-x}},
			}
			
			\bib{MM18}{article}{
				AUTHOR = {Mielke, A.},
				AUTHOR = {Mittnenzweig, M.},
				TITLE = {Convergence to equilibrium in energy-reaction-diffusion
					systems using vector-valued functional inequalities},
				JOURNAL = {J. Nonlinear Sci.},
				FJOURNAL = {Journal of Nonlinear Science},
				VOLUME = {28},
				YEAR = {2018},
				NUMBER = {2},
				PAGES = {765--806},
				ISSN = {0938-8974},
				MRCLASS = {35K57 (35B40 35Q79 92E20)},
				MRNUMBER = {3770199},
				MRREVIEWER = {Cecilia Cavaterra},
				DOI = {\doiref{10.1007/s00332-017-9427-9}},
			}
			
			\bib{SR52}{article}{
				AUTHOR = {Shockley, W.}, 
				AUTHOR = {Read, W. T.},
				TITLE = {Statistics of the Recombinations of Holes and Electrons},
				JOURNAL = {Phys. Rev.},
				VOLUME = {87},
				ISSUE = {5},
				PAGES = {835--842},
				YEAR = {1952},
				MONTH = {Sep},
				PUBLISHER = {American Physical Society},
				DOI = {\doiref{10.1103/PhysRev.87.835}},
			}
			
			\bib{W90}{article}{
				AUTHOR = {Wachutka, G. K.},
				TITLE = {Rigorous thermodynamic treatment of heat generation and conduction in semiconductor device modeling}, 
				JOURNAL = {IEEE T. Comput. Aid. D.}, 
				FJOURNAL = {IEEE Transactions on Computer-Aided Design of Integrated Circuits and Systems}, 
				YEAR = {1990},
				VOLUME = {9},
				NUMBER = {11},
				PAGES = {1141--1149},
				DOI = {\doiref{10.1109/43.62751}},
			}
			
		\end{biblist}
	\end{bibdiv}
\end{document}